\documentclass[12pt]{article}

\usepackage{subfigure}
\usepackage{color}
\usepackage[numbers,sort&compress]{natbib}
\usepackage[english]{babel}
\usepackage{amsmath,amsthm}
\usepackage{amsfonts}
\usepackage{float}
\usepackage{indentfirst}
\usepackage[hang,flushmargin]{footmisc}
\allowdisplaybreaks[4]

\usepackage{graphicx}
\newtheorem{thm}{Theorem}
\newtheorem{cla}{Claim}
\newtheorem{lem}{Lemma}

\newtheorem{ob}{Observation}
\newtheorem{proof of Thm}{proof of Thm}

\begin{document}

\begin{sloppypar}

\baselineskip 0.65cm

\centerline{\large\large The Outerplanar Tur\'{a}n Number of Double Stars}\vskip .1in

\renewcommand{\thefootnote}{}

\footnotetext{
Corresponding author. E-mail: yxlan@hebut.edu.cn.}

\centerline{\small Chaofan Zhang, Yongxin Lan$^{\ast}$, Changqing Xu
 } \vskip.12in

\centerline{\small School of Science, Hebei University of Technology, Tianjin 300401, China}
\vskip.25in

\begin{abstract}
Let $H$ be a nonempty graph. A graph is $H$-free if it does not contain any copy of $H$ as a subgraph. The outerplanar Tur\'{a}n number of $H$, denoted by $ex_{_\mathcal{OP}}(n,H)$, is the maximum number of edges among all $H$-free outerplanar graphs on $n$ vertices. A double star $S_{p,q}$ is the graph obtained from an edge by joining its two endpoints with $p$ and $q$ isolated vertices respectively, where $q \ge p\ge 1$. In this paper, we determine the exact values of $ex_{_\mathcal{OP}}(n,S_{p,q})$ for all $q\ge p\ge 2$, with the sole exception of $p=2$ and $q=3$; for the latter, we establish a lower bound.

\bigskip

\noindent\textbf{Keywords:} Outerplanar graph; Tur\'{a}n number; Double star
\end{abstract}

\section{Introduction}

In this paper, we consider only finite, undirected and simple graphs. Let $G$ be a graph with vertex set $V(G)$ and edge set $E(G)$. We use $\delta(G)$ and $\Delta(G)$ to denote the minimum and maximum degree of $G$. For a subset $S$~(or $E$) of $V(G)$~(or $E(G)$), the induced subgraph on $S$~(or $E$) is denoted by $G[S]$~(or $G[E]$), and $G \setminus S$ denotes the induced subgraph on $V(G) \setminus S$. For a vertex $v$ in $V(G)$, $N_G(v)$ denotes the set of vertices adjacent to $v$ in $G$, and the degree of $v$ in $G$, written $d_{G}(v)$, is defined as $|N_{G}(v)|$. For a subgraph $H$ of $G$ and a vertex $v$ in $V(H)$, let $N(v,G\setminus V(H))=N_{G}(v)\setminus N_{H}(v)$ and $d(v,G\setminus V(H))=|N(v,G\setminus V(H))|$. Let $C_{k}$ and $P_{k}$ denote the cycle and path on $k$ vertices, respectively. A double star $S_{p,q}$ is the graph obtained from an edge by joining its two endpoints with $p$ and $q$ isolated vertices respectively, where $q \ge p\ge 1$.

A block is a maximal nonseparable subgraph. An endblock of a connected graph $G$ is a block that contains exactly one cut vertex (except when the graph itself is a block). If blocks $B_1$ and $B_2$ share exactly one common vertex, then $B_1$ and $B_2$ are adjacent. Given a block $B$ of $G$, $v\in V(G)$ is said to be adjacent to $B$ if $v$ is a cut vertex of $G$ and $v\in V(B)$. A planar graph is outerplanar if it has a planar embedding such that all vertices lie on the boundary of its outer face, such a planar embedding is called an outerplane graph. An outerplanar graph $G$ is maximal if the graph obtained from $G$ by joining any two non-adjacent vertices of $G$ is not outerplanar. Let $\mathcal{M}_k$ denote the family of maximal outerplanar graphs of order $k$~($k\ge 1$). Since $|\mathcal{M}_k| = 1$ when $1 \le k \le 5$, we simply denote $\mathcal{M}_k$ as $M_k$ in the following. For all positive integer $k$, let $[k]=\{1,2,\cdots,k\}$. Terminology and notation not introduced in this paper can be referred to~\cite{BM}.

Let $H$ be a nonempty graph. A graph is $H$-free if it does not contain any copy of $H$ as a subgraph. A central problem in extremal graph theory is the Tur\'{a}n problem: determining the maximum number of edges in an $n$-vertex $H$-free graph and characterising all extremal graphs. This problem gives rise to the concept of the Tur\'{a}n number $ex(n, H) $, which denotes the maximum number of edges in an $n$-vertex $H$-free graph.

In 1984, Erd\H{o}s~\cite{E84} initiated research on the Tur\'{a}n number of $H$ in an $n$-dimensional hypercube $Q_n$, denoted by $ex(Q_n, H)$, by studying the case where $H$ is an even cycle; he conjectured that $ex(Q_n, C_{4})=(\frac{1}{2} + o(1))e(Q_n)$. Since then, researchers such as Baber~\cite{B12}, Chung~\cite{Ch92}, Conder~\cite{C93} and Grebennikov and Marciano~\cite{G25} have made contributions to the study of $ex(Q_n, C_{2k})$ for $k\ge 2$, but its exact value remains unknown. Very recently, when $H$ is a double star, Liu and Xu~\cite{LX} determined $ex(Q_n ,S_{n-1, n-1})=(4n - 3)2^{n-3}$ for all $n \geq 3$.

Tur\'{a}n-type problems for double stars have also been extensively studied in planar graphs. In 2016, Dowden~\cite{Dow16} initiated the study of planar Tur\'{a}n-type problems. The planar Tur\'{a}n number $ex_{_\mathcal{P}}(n, H)$ is defined as the maximum number of edges in an $n$-vertex $H$-free planar graph. In 2019, Lan, Shi and Song~\cite{LSS19a} proved that if $H$ is $K_{4}$-free and $\Delta(H)\ge 7$, then $ex_{_\mathcal{P}}(n, H) = 3n-6$ for all $n \ge |V(H)|$. So for all $q\ge 6$, $ex_{_\mathcal{P}}(n, S_{p,q})=3n-6$ as $S_{p,q}$ is $K_{4}$-free. For all $p\ge 4$, $ex_{_\mathcal{P}}(n, S_{p,q})=3n-6$ since the double wheel graph $2K_{1}+C_{n-2}$ is $S_{p,q}$-free~\cite{XZLbal}. Only the cases when $p \le 3$ and $q \le 5$ remain to be further considered. In 2022, Ghosh, Gy\H{o}ri, Paulos and Xiao~\cite{GGPX22} pioneered the systematic study of the planar Tur\'{a}n number of double stars $S_{p,q}$ for $q\ge p\ge 2$ and achieved groundbreaking results: they determined its exact value when $p+q \le 5$; they derived its lower and upper bounds when $6\le p+q \le 7$, where the upper bounds when $p+q = 6$ were improved by Xu and Shao~\cite{XuS24} and Xu, Zhou, Li and Yan~\cite{XZLbal}; the upper bounds when $p+q = 7$ were improved by Xu, Shao and Zhou~\cite{X2l} and Xu, Zhang and Shao~\cite{X25a}. In 2025, Liu and Xu~\cite{X25b} extended this line of research by establishing an upper bound of $ex_{_\mathcal{P}}(n, S_{3,5})$. The best current results are presented in Theorem~\ref{thzj}.

\begin{thm}
{\rm{\label{thzj}}}
Let $n$ be a positive integer.

{\noindent}(a)~\textup{\cite{GGPX22}}~$ex_{_\mathcal{P}}(n,S_{2,2})= 2n-4$ for all $n\ge 16$.

{\noindent}(b)~\textup{\cite{GGPX22}}~$ex_{_\mathcal{P}}(n,S_{2,3})= 2n$ for all $n\ge 6$.

{\noindent}(c)~\textup{\cite{XuS24}}~$ex_{_\mathcal{P}}(n, S_{2,4})\le \frac{31n}{14}$ for all $n\ge 1$, equality holds when $n\equiv 0~(\emph{mod}~14)$.

{\noindent}(d)~\textup{\cite{XZLbal}}~$ex_{_\mathcal{P}}(n, S_{3,3})=\lfloor\frac{5n}{2}\rfloor-5$ for all $n\ge 10$.

{\noindent}(e)~\textup{\cite{GGPX22,X2l}}~$\frac{5n}{2}\le ex_{_\mathcal{P}}(n, S_{2,5})\le \frac{19n-18}{7}$ for all $n\ge 1$.

{\noindent}(f)~\textup{\cite{X25a}}~$ex_{_\mathcal{P}}(n, S_{3,4})\le \frac{5n}{2}$ for all $n\ge 1$, equality holds when $n\equiv 0~(\emph{mod}~12)$.

{\noindent}(g)~\textup{\cite{X25b}}~$ex_{_\mathcal{P}}(n, S_{3,5})\le \frac{23n}{8}-\frac{9}{2}$ for all $n\ge 2$, equality holds when $n = 12$.
\end{thm}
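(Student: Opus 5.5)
Theorem~\ref{thzj} collects results already established in \cite{GGPX22,XuS24,XZLbal,X2l,X25a,X25b}. The plan is therefore to verify each part by invoking the cited source. If one wanted to reconstruct the arguments, each item splits into a lower-bound construction and a matching (or weaker) upper bound, and I would organize the work that way.

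For the lower bounds I would give explicit $S_{p,q}$-free planar graphs. The key structural remark is that $G$ is $S_{p,q}$-free if and only if no edge $uv$ has endpoints with $p$ and $q$ private neighbours outside $\{u,v\}$ in the required way. For a single edge $uv$ this reduces to a Hall-type condition on $d(u)-1$, $d(v)-1$ and $|N(u)\cap N(v)|$. Planarity caps $|N(u)\cap N(v)|$ structurally, since $u,v$ together with three or more common neighbours already force an obstruction.
\begin{itemize}
\item For (a), I would take $K_{2,n-2}$ with at most a few extra edges, giving $2n-4$.
\item For (b), (d) and (f), I would use graphs in which every edge has an endpoint of small degree, or has endpoints sharing many common neighbours. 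Examples are blow-ups of small triangulated gadgets glued along degree-bounded vertices, repeated periodically; this is where the moduli $14$ and $12$ in (c) and (f) come from.
\end{itemize}

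For the upper bounds I would use discharging on a plane embedding. Give each vertex charge $d(v)-c$ and each face charge $c'(|f|-2)$-type values, with the constants tuned so that the total charge equals the target bound by Euler's formula. The local configurations are then classified around high-degree vertices.
\begin{itemize}
\item Two adjacent vertices cannot both have large degree unless they share many common neighbours.
\item In a plane graph, sharing many common neighbours forces many faces of size three incident to a low-degree vertex.
\end{itemize}
The rules move charge from large faces and from high-degree vertices to their low-degree neighbours. One then checks that every element ends with nonnegative charge.

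The main obstacle is the case analysis in the discharging step for the larger cases $p+q\ge 6$, namely (c), (e), (f) and (g). There the forbidden configurations are only weakly local, and the number of neighbourhood types of a degree-$\Delta$ vertex grows quickly. For the purposes of this paper no new proof is needed: each item is quoted verbatim from its reference. Only the planar results are used, as context for the outerplanar problem that follows.
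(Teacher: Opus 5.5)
Your proposal matches the paper. Theorem~\ref{thzj} is a compilation of results quoted from the cited planar Tur\'{a}n literature, and the paper likewise gives no proof of its own, only the references.

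One caution about your optional reconstruction sketch. Planarity does \emph{not} bound $|N(u)\cap N(v)|$ for an edge $uv$, because $K_{1,1,m}$ (an edge joined to $m$ independent vertices) is planar for every $m$. The bound of at most two common neighbours belongs to outerplanar graphs, which exclude $K_{2,3}$. That remark therefore could not be used in rebuilding the planar upper bounds.
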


Let $ex_{_\mathcal{OP}}(n,H)$~($ex_{_\mathcal{OP}}^{c}(n,H)$) denote the maximum number of edges among all (connected) $H$-free outerplanar graphs on $n$ vertices. In this paper, we focus on the outerplanar Tur\'{a}n number of double stars. We determine the exact values of $ex_{_\mathcal{OP}}(n,S_{p,q})$ for all $q\ge p\ge 2$, with the sole exception of the special case $p=2$ and $q=3$.

To determine $ex_{_\mathcal{OP}}(n, S_{2,2})$, we first prove $ex_{_\mathcal{OP}}^c(n, S_{2,2})$ in Theorem~\ref{jg1} as a foundation, then show in Theorem~\ref{jg2} that $ex_{_\mathcal{OP}}(n, S_{2,2})$ matches it for all $n \ge 6$ except $n = 10$.
\smallskip

\begin{thm}
{\rm{\label{jg1}}}
Let $n$ be a positive integer. We have $ex_{_\mathcal{OP}}^{c}(n,S_{2,2})= \lfloor\frac{3(n-1)}{2}\rfloor$ for all $n\ge 6$.
\end{thm}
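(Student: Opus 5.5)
Both bounds are proved separately. We use throughout the fact that a graph $G$ is $S_{2,2}$-free if and only if every edge $uv$ with $d_G(u)\ge 3$ and $d_G(v)\ge 3$ lies in a triangle that absorbs the extra neighbours. More precisely, $G$ contains $S_{2,2}$ with central edge $uv$ exactly when one can choose two neighbours of $u$ and two neighbours of $v$, all four distinct and different from $u,v$. By Hall-type counting this fails only when $|N(u)\setminus\{v\}|\le 1$, or $|N(v)\setminus\{u\}|\le 1$, or $|N(u)\cup N(v)\setminus\{u,v\}|\le 3$. So every edge joining two vertices of degree at least $3$ satisfies $|N(u)\cup N(v)\setminus\{u,v\}|\le 3$.

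\textbf{Lower bound.} I would glue copies of $M_4$ (the diamond $K_4-e$, with $5$ edges) or triangles at a common structure. The natural candidate is a ``friendship-like'' graph: a central vertex $w$ together with $\lfloor (n-1)/2\rfloor$ triangles through $w$, plus a pendant edge at $w$ if $n$ is even. This graph is connected and outerplanar and has $3\lfloor (n-1)/2\rfloor + [n \text{ even}] = \lfloor 3(n-1)/2\rfloor$ edges. It is $S_{2,2}$-free because every vertex other than $w$ has degree at most $2$, so any edge $wx$ has $|N(x)\setminus\{w\}|\le 1$. This gives $ex^{c}_{\mathcal{OP}}(n,S_{2,2})\ge\lfloor 3(n-1)/2\rfloor$.

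\textbf{Upper bound.} Induct on $n$, using the block structure of a connected $S_{2,2}$-free outerplanar $G$. First classify the $2$-connected $S_{2,2}$-free outerplanar blocks. A $2$-connected outerplanar graph is a Hamiltonian cycle with non-crossing chords. If it has a chord $uv$, both endpoints have degree at least $3$, so $|N(u)\cup N(v)\setminus\{u,v\}|\le 3$; this forces the block to have at most $5$ vertices. A short case check should show that the only possibilities are cycles $C_k$, $M_4$ and $M_5$ (or subgraphs with few chords). Moreover $M_5$ and $C_k$ with $k\ge 6$ contain no $S_{2,2}$ only if nothing further is attached. Next, use the global condition to restrict attachments. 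If a cut vertex $x$ lies in a block in which it has two neighbours, and $x$ has an outside neighbour $y$, then $y$ must have degree at most $2$ or be tightly constrained; similarly for edges inside cycles of length at least $5$ whose endpoints have outside neighbours. The cleanest route is a weighting argument. Write $e(G)=\sum_B e(B)$ over blocks and $n-1=\sum_B(|B|-1)$. It then suffices to show $\sum_B \bigl(e(B)-\tfrac32(|B|-1)\bigr)\le 0$, up to the floor. Triangles have excess $0$, bridges have excess $-1/2$, $C_k$ with $k\ge 4$ has excess $(2-k)/2<0$, and $M_4$ has excess $1/2$. $M_5$ has excess $1$ and cannot coexist with any other block: every vertex of $M_5$ already has at least $2$ neighbours, so any added edge at a vertex creates $S_{2,2}$. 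So for $n\ge 6$ an $M_5$ block never occurs.

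The main obstacle is the $M_4$ blocks, which have positive excess. I would show that each cut vertex of an $M_4$ block must be one of its degree-$2$ vertices. Indeed, a degree-$3$ vertex $u$ of the diamond with an outside neighbour $z$ yields $S_{2,2}$ via the central chord $uv$: take $z$ and one triangle vertex for $u$, and the other triangle vertex plus ... this needs care. Attaching anything at a degree-$2$ vertex $a$ also seems to create $S_{2,2}$ via the edge $au$: for $a$ use the outside neighbour and the other diamond vertex adjacent to $a$; for $u$ use the remaining two. So I expect a connected graph with $n\ge 6$ to contain no $M_4$ block at all. This would leave only triangles, bridges and longer cycles, each with excess at most $0$. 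Integrality of $e(G)$ then gives $e(G)\le\lfloor 3(n-1)/2\rfloor$. The delicate part is verifying these attachment exclusions exhaustively, especially for $4$-cycles with a chord and small cycles sharing cut vertices. If the exclusion fails in some case, I would instead pair each $M_4$ with an adjacent negative-excess block (a bridge or a long cycle) forced by the $S_{2,2}$-freeness.
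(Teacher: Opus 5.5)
Your lower bound is the paper's construction and is correct. The identity $\sum_B\bigl(e(B)-\tfrac32(|B|-1)\bigr)=e(G)-\tfrac32(n-1)$ is also a legitimate, and potentially cleaner, alternative to the paper's induction on $n$. However, the two structural claims meant to make every block excess nonpositive are false.

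First, a chord does not force a block to have at most $5$ vertices. Your own criterion only says that a chord $v_iv_j$ of the Hamilton cycle needs a coincidence among $v_{i\pm1},v_{j\pm1}$, i.e.\ it must cut off a triangle. That is compatible with any order. For instance, $C_6$ plus the chord $v_1v_3$ is a $2$-connected $S_{2,2}$-free outerplanar block. So is $C_k$ with $\lfloor k/4\rfloor$ such chords whose endpoints are pairwise non-consecutive on the cycle. Your excess table (bridges, triangles, chordless cycles, $M_4$, $M_5$) therefore omits an infinite family, and you need a real bound for it. This is exactly the content of the paper's Lemma~\ref{lemjg1}: $\Delta(B)\le 3$ and no two degree-$3$ vertices are consecutive on the Hamilton cycle. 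Hence $e(B)\le\lfloor 5k/4\rfloor$, and the excess is at most $(6-k)/4\le 0$ for $k\ge 6$. (Also, the excess of $C_k$ is $(3-k)/2$, not $(2-k)/2$; that slip is harmless.)

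Second, $M_4$ blocks do occur when $n\ge 6$, even in extremal graphs. The diamond $abcd$ with chord $bc$, plus a path $a\,z\,w$, has $6$ vertices and $7=\lfloor 15/2\rfloor$ edges, and it is $S_{2,2}$-free. Neither of your proposed copies of $S_{2,2}$ exists. If a diamond vertex has a single outside neighbour $z$, every diamond edge $xy$ at it still has $|N(x)\cup N(y)\setminus\{x,y\}|\le 3$. For $ab$ this set is $\{c,d,z\}$, so ``the remaining two'' for $b$ is only $\{d\}$. Thus the fallback you mention in one sentence is the actual crux, and it is unproved. What you need is the following:
\begin{itemize}
\item Each vertex $x$ of an $M_4$ block $D$ has at most one neighbour outside $D$. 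Otherwise two outside neighbours, together with an edge $xy$ where $y\neq x$ is a degree-$3$ vertex of $D$, give an $S_{2,2}$.
\item Hence the only other block at a cut vertex $x$ of $D$ is a bridge $xz$.
\item Moreover $d(z)\le 2$. Otherwise $xz$ joins two vertices of degree at least $3$ whose remaining neighbours lie on opposite sides of the bridge.
\end{itemize}
It follows that no bridge is adjacent to two $M_4$ blocks. So each $M_4$'s excess $+\tfrac12$ can be charged injectively to an adjacent bridge's $-\tfrac12$.

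With this charging, the large-block bound above, and your exclusion of $M_5$, the weighting argument would go through. The $M_5$ exclusion is correct, but the reason is its apex of degree $4$, not merely that every vertex has two neighbours, which also holds in $C_5$. As written, the proposal does not go through.
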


\begin{thm}
{\rm{\label{jg2}}}
Let $n$ be a positive integer with $n\ge 6$. We have
$$ex_{_\mathcal{OP}}(n,S_{2,2})={\left\{\begin{array}{ll}
  \lfloor\frac{3(n-1)}{2}\rfloor,& if~n\ge 6~and~n\neq 10; \\
  14,& if~n=10.
\end{array}\right.}$$
\end{thm}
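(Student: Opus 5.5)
We prove Theorem~\ref{jg2} by reducing to connected components and invoking Theorem~\ref{jg1}, together with the exact values of $ex_{_\mathcal{OP}}^{c}(k,S_{2,2})$ for small orders $k\le 5$.

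\textbf{Small components.} Every graph on at most $5$ vertices has no $S_{2,2}$, since $S_{2,2}$ has $6$ vertices. Hence $ex_{_\mathcal{OP}}^{c}(k,S_{2,2})=e(M_k)=2k-3$ for $2\le k\le 5$, giving the values $0,1,3,5,7$ for $k=1,\dots,5$. For $k\ge 6$, Theorem~\ref{jg1} gives $f(k)=\lfloor 3(k-1)/2\rfloor$.

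\textbf{Upper bound.} Let $G$ be an $S_{2,2}$-free outerplanar graph on $n$ vertices with components of orders $n_1,\dots,n_t$. Then $e(G)\le \sum_i f(n_i)$, and it suffices to maximize $\sum f(n_i)$ subject to $\sum n_i=n$. Consider $g(k)=f(k)-\frac{3}{2}k$:
\begin{itemize}
\item $g(5)=-\frac12$ and $g(4)=-1$;
\item for $k\ge 6$, $g(k)\in\{-\frac32,-2\}$;
\item $g(3)=-\frac32$, $g(2)=-2$, $g(1)=-\frac32$.
\end{itemize}
Two consequences follow:
\begin{itemize}
\item Merging two components of order $\ge 6$ into one component of order $\ge 12$ gains at least $1$. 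So in an optimal partition there is at most one large part.
\item Small parts are best as copies of $M_5$, each contributing $-\frac12$.
\end{itemize}
We then compare the candidates:
\begin{itemize}
\item A single component gives $\lfloor 3(n-1)/2\rfloor = \frac32 n-\frac32$ or $\frac32 n-2$.
\item Two copies of $M_5$ (only when $n=10$) give $14 > 13$.
\item For $n=15$, three copies of $M_5$ give $21$, which equals the single-component value $\lfloor 42/2\rfloor=21$, so there is no gain.
\item With $a$ copies of $M_5$ plus other parts, the total is $\frac32 n - \frac a2 + \sum(\text{other }g)$. The other parts contribute at most $-1$ if any exist, unless they are $M_4$ with $g=-1$. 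For $a\ge 3$ with no other parts the total is at most $\frac32 n-\frac32$. For $a=2$ with nothing else, $n=10$. With $a\le 2$ plus another part, the total is at most $\frac32 n - \frac a2-1$. When $a=1$ this is $\frac32 n-\frac32$, which is matched or beaten by a single component exactly when $n$ is odd.
\end{itemize}
The parity cases need care: for $n$ even, the single-component value is $\frac32 n-2$, so we must check that every split gives at most $\frac32 n-2$ when $n\ne 10$. A split $M_5\cup H$ with $|H|=n-5$ odd gives $7+\lfloor 3(n-6)/2\rfloor = \frac32 n-2$, so it only ties. Combinations of $M_4$ and $M_5$ parts are checked by the same arithmetic, e.g.\ $M_4\cup M_5$ for $n=9$ gives $12 = \lfloor 24/2\rfloor$. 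This careful bookkeeping over all small-part combinations, especially $n\le 15$, is the main obstacle. We handle it by first normalizing via $g$, then listing the finitely many partitions with at most one part of order $\ge 6$.

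\textbf{Lower bound.} For $n\ne 10$, Theorem~\ref{jg1} provides a connected extremal graph, so $ex_{_\mathcal{OP}}(n,S_{2,2})\ge\lfloor 3(n-1)/2\rfloor$. For $n=10$, the graph $2M_5$ is outerplanar and $S_{2,2}$-free with $14$ edges.
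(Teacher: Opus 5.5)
Your argument is correct and is essentially the paper's proof. The paper also splits $G$ into components, bounds those of order at least $6$ by Theorem~\ref{jg1} and the others by $\max\{0,2k-3\}$, and its quantity $p=3(1-s_6-s_1-s_2-s_3)-s_2-2s_4-s_5$ is exactly $3+2\sum_i g(n_i)$ (with $g\le -\frac32$ used for the large parts). So its condition $p\le 0$ is your $\sum_i g(n_i)\le -\frac32$, and the parity bookkeeping you flag as the main obstacle reduces to one line. Any split into at least two parts other than $\{5,5\}$ has $\sum_i g(n_i)\le -\frac32$, since every part has $g\le -\frac12$ with equality only at order $5$. Integrality of $e(G)$ then gives $e(G)\le\lfloor\frac{3(n-1)}{2}\rfloor$ directly, and this also covers your $a=0$ case, where the bound $\frac32 n-1$ you state would not suffice on its own.
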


\begin{thm}
{\rm{\label{jg3}}}
Let $p,q,n$ be positive integers with $n\ge p+q+2$ and $q\ge p\ge 2$. If $p\ge 3$ or $q\ge 4$, then
$$ex_{_\mathcal{OP}}(n,S_{p,q})= 2n-3.$$
\end{thm}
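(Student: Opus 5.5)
Since every outerplanar graph on $n\ge 2$ vertices has at most $2n-3$ edges, the upper bound $ex_{_\mathcal{OP}}(n,S_{p,q})\le 2n-3$ is automatic. The whole proof therefore reduces to constructing, for every $n\ge p+q+2$, a maximal outerplanar graph on $n$ vertices that contains no copy of $S_{p,q}$ whenever $p\ge 3$ or $q\ge 4$.

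The basic observation is that $G$ contains $S_{p,q}$ if and only if $G$ has an edge $uv$ such that $d(u)\ge p+1$, $d(v)\ge q+1$, and the neighbourhoods of $u$ and $v$ outside $\{u,v\}$ are large enough that disjoint leaf sets of sizes $p$ and $q$ can be chosen. In a maximal outerplanar graph, $u$ and $v$ have at most two common neighbours, since the edge $uv$ lies on at most two triangles and more common neighbours would create a $K_{2,3}$. So the condition is essentially $d(u)-1\ge p$, $d(v)-1\ge q$ and $d(u)+d(v)-2-|N(u)\cap N(v)|\ge p+q$. It therefore suffices to find maximal outerplanar graphs in which adjacent vertices never both have large degree.

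\textbf{Construction.} I would take the fan $K_1+P_{n-1}$ and modify it so that high-degree vertices are isolated from one another.

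In the pure fan, the apex $w$ has degree $n-1$, the path vertices have degree at most $3$, and each edge $wx$ has exactly two common neighbours.
\begin{itemize}
\item For $q\ge 4$: with $u$ a path vertex of degree $3$ and $v=w$, disjoint leaf sets force $p\le 2-\text{(shared)}$. The two path-neighbours of $u$ are also neighbours of $w$, so $u$ has only $2$ neighbours besides $w$, and both are shared with $w$. Choosing them as $u$'s leaves removes them from $w$'s pool, leaving $n-4$ for $q$. So the fan does contain $S_{2,q}$, and the fan alone is not enough.
\item I would therefore use a graph where every vertex of degree at least $3$ has all neighbours of degree at most $3$, or pair moderate degrees. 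The natural candidate is a ``zigzag'' triangulated polygon (a triangulated ladder, $P_{n}^2$-like), which is maximal outerplanar with maximum degree $4$ and adjacent degree-$4$ vertices sharing two common neighbours.
\end{itemize}

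In $P_n^2$, an edge $uv$ between two degree-$4$ vertices gives disjoint leaf pools: each vertex has $3$ other neighbours, two of them common, so the union has size $4$. Hence $p+q\le 4$ is the maximum possible, and since $p\ge 3$ or $q\ge 4$ means $p+q\ge 6$, there is no $S_{p,q}$. This handles all cases, including the edge $wx$ problem above, because no vertex has degree $\ge q+1\ge 5$ when $q\ge 4$. When $p\ge 3$ and $q=3$, we need $d(u),d(v)\ge 4$ and a union pool of size at least $6$, but the pool size is at most $4$. Small $n$ are fine, since $P_n^2$ exists for all $n\ge 3$ and has $2n-3$ edges.

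\textbf{Main obstacle.} The main step is the counting check that in $P_n^2$ every edge $uv$ satisfies $|N(u)\cup N(v)\setminus\{u,v\}|\le 4$; this is straightforward but must include the end vertices. Since it takes care of every case with $p+q\ge 5$, the hypothesis $n\ge p+q+2$ is only needed so that $S_{p,q}$ fits.
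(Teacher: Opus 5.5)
Your overall route works, but the counting step you single out as the main check is false as stated. In $P_n^2$ (vertices $1,\dots,n$, edges $\{i,i+1\}$ and $\{i,i+2\}$), the edges $\{i,i+2\}$ are outer edges and lie on only one triangle. So two adjacent degree-$4$ vertices may share just one common neighbour, and then the leaf pool $N(u)\cup N(v)\setminus\{u,v\}$ has $5$ vertices, not $4$. For instance, $P_7^2$ contains $S_{2,3}(3,5;\{1,2\},\{4,6,7\})$. Your closing remark that the construction handles every case with $p+q\ge 5$ is therefore wrong, and this is exactly why the paper has to treat $S_{2,3}$ separately.

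The repair is short. Every edge of a maximal outerplanar graph on at least $3$ vertices lies on a triangle. So when $\Delta\le 4$, every edge $uv$ has a pool of at most $3+3-1=5<6\le p+q$ vertices, which still excludes $S_{3,3}$. For $q\ge 4$ you need no pool count at all, since $\Delta(P_n^2)=4<q+1$.

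With that correction your proof is valid, and it differs from the paper only in the case $p=q=3$. For $q\ge 4$ the paper, like you, uses a maximal outerplanar graph of maximum degree $4$. For $S_{3,3}$ it instead uses the fan $K_1+P_{n-1}$, the graph you discarded because it contains $S_{2,q}$. The fan has only one vertex of degree at least $4$, whereas $S_{3,3}$ needs two, so a pure degree count suffices. Your version is more uniform: one explicit graph $P_n^2$ covers all cases, whereas the paper only names and depicts its $O_n$. The cost is the common-neighbour argument above, which must be stated with the correct bound of $5$.
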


We establish a lower bound for $ex_{_\mathcal{OP}}(n,S_{2,3})$ in Theorem~\ref{jg4} and propose a conjecture regarding its exact value. Furthermore, we conjecture that $ex_{_\mathcal{OP}}(n,S_{2,3})=ex_{_\mathcal{OP}}^{c}(n,S_{2,3})$ for all $n\ge 1$.

\begin{thm}
{\rm{\label{jg4}}}
Let $n$ be a positive integer with $n\ge 7$. We have
$$ex_{_\mathcal{OP}}(n,S_{2,3})\ge{\left\{\begin{array}{ll}
  \frac{5n-3}{3},& if~n\equiv 0~(\emph{mod}~6); \\
  \frac{5n-5}{3},& if~n\equiv 1~(\emph{mod}~6); \\
  \frac{5n+i-9}{3},& if~n\equiv i~(\emph{mod}~6),~where~i\in\{2,3,4,5\}.
\end{array}\right.}$$
\end{thm}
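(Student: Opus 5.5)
The plan is an explicit construction: glue copies of the $6$-vertex maximal outerplanar graph in a path with bridges. Write $n=6k+i$ with $i\in\{0,1,\dots,5\}$. Each piece $M_6$ has $2\cdot 6-3=9$ edges. Joining $k$ pieces $A_1,\dots,A_k$ by bridges $A_1A_2,\dots,A_{k-1}A_k$ gives an outerplanar graph on $6k$ vertices with $9k+(k-1)=10k-1=\frac{5n-3}{3}$ edges when $i=0$. Every block is a copy of $M_6$ or a bridge, so the graph is outerplanar.

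For the remainder I would attach one more piece $R$ on $i$ vertices by a single bridge to $A_k$.
\begin{itemize}
\item For $i=1$, $R$ is a pendant vertex. This gives $10k=\frac{5n-5}{3}$ edges.
\item For $i\in\{2,3,4,5\}$, take $R=M_i$, which has $2i-3$ edges. Together with the bridge this adds $2i-2$ edges, for a total of $10k+2i-3=\frac{5n+i-9}{3}$.
\end{itemize}
These are exactly the bounds in Theorem~\ref{jg4}. The case $n\ge 7$ guarantees $k\ge1$. Any small case where the attachments conflict can be handled by reshuffling, e.g.\ writing $n=6(k-1)+(6+i)$ and using a hand-built last piece.

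The substance is choosing the pieces and bridge endpoints so that the graph is $S_{2,3}$-free. First, $S_{2,3}$ has $7$ vertices and $M_6$ (or $M_i$, $i\le5$) has at most $6$. So any copy of $S_{2,3}$ must use a bridge, or a vertex outside the piece containing its central edge. Let $uv$ be the central edge, with $u$ needing $2$ further neighbours and $v$ needing $3$, all distinct. I would check three situations.
\begin{itemize}
\item[(a)] The central edge is a bridge $uv$ with $u\in A_j$ and $v\in A_{j+1}$. Then we need $d_{A_j}(u)\le1$ or $d_{A_{j+1}}(v)\le 2$, and symmetrically. This suggests taking bridge endpoints among the degree-$2$ vertices of $M_6$.
\item[(b)] The central edge lies inside $A_j$ and one endpoint is a bridge endpoint. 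Then that endpoint gains extra neighbours outside $A_j$, and we must ensure the other endpoint cannot supply the remaining leaves inside $A_j$.
\item[(c)] The central edge lies inside a piece and neither endpoint is a bridge endpoint. This is impossible, since all leaves would then lie in the same $6$-vertex piece.
\end{itemize}

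Interior pieces need two distinct bridge endpoints $x_j,y_j$. For $M_6$ I would first try the triangulated hexagon whose degree sequence is $(2,2,3,3,4,4)$, or the one with $(2,2,2,4,4,4)$. I would take $x_j,y_j$ to be two degree-$2$ vertices whose neighbours have degree as small as possible, so that each bridge endpoint has degree exactly $3$. Then in case (b) the check is as follows. A neighbour $w$ of $x_j$ in $A_j$ together with $x_j$ (whose other neighbours are one vertex of $A_j$ and one outside) must not give $w$ three further neighbours avoiding these. Likewise $x_j$ as the "$3$-side" needs $3$ others but has only $2$, so it is harmless there.

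I expect this case (b) verification to be the main obstacle. In $M_6$ a degree-$4$ vertex adjacent to a degree-$2$ vertex $x$ can already have $3$ other neighbours disjoint from $x$'s remaining $A$-neighbour. If every choice of $M_6$ fails for this reason, I would replace $M_6$ by another $6$-vertex outerplanar piece with $9$ edges counting the bridge differently. One option is to attach bridges to vertices of degree $2$ whose neighbours have degree $\le 3$. Another is to use $5$-vertex pieces $M_5$ plus a degree-controlled connector. Either way I would recheck that the edge count per $6$ new vertices stays $10$.
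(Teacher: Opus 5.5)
Your construction is the paper's: copies of a $6$-vertex maximal outerplanar graph of maximum degree $4$, chained by bridges between degree-$2$ vertices, plus a pendant $M_i$. The paper uses the piece with degree sequence $(2,2,3,3,4,4)$. Your edge counts $10k-1$, $10k$, $10k+2i-3$ are correct.

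\textbf{The gap is the $S_{2,3}$-freeness.} You rightly call it the substance but do not prove it. Case (b) is never carried out. The obstruction you fear there does not occur for your candidate pieces. The fallbacks (replacing $M_6$, ``reshuffling'', a ``hand-built last piece'') are not an argument.

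The missing observation is that every edge of a maximal outerplanar graph lies on a triangle. In particular, if $x$ has degree $2$ in $A_j$ with $A_j$-neighbours $w,z$, then $wz\in E(A_j)$. Now suppose the bridge endpoint $x$ (degree $3$ overall) is the centre with two leaves. Those leaves are forced to be $z$ and the outside neighbour of $x$. The other centre $w$ then needs three neighbours besides $x$ and $z$, so $d(w)\ge 5$. With either of your two candidate $M_6$'s the whole graph has maximum degree $4$, since degree-$4$ vertices are never bridge endpoints, so this cannot happen. Also, $x$ cannot be the centre with three leaves, because $d(x)=3$.

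So your worry that a degree-$4$ neighbour of $x$ has three neighbours avoiding $z$ is false. It only happens at the apex of the fan $M_6$ ($\Delta=5$), which really does contain $S_{2,3}$ centred on the edge from the apex to a bridge endpoint. The common-neighbour-plus-degree-cap argument is exactly what the paper uses (its claim on $(3,4)$-degree edges). It combines this with the fact that every bridge joins two vertices of degree at most $3$, which disposes of your case (a).

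You also need to pin down how the remainder piece is attached, which your proposal leaves unspecified. $R$ must be joined to the unused degree-$2$ vertex $y_k$ of $A_k$, through a vertex of degree at most $2$ in $M_i$. Other choices fail:
\begin{itemize}
\item Attaching the pendant vertex or $M_i$ to a degree-$4$ vertex of $A_k$ raises the maximum degree to $5$ and does create copies of $S_{2,3}$. For example, in the zigzag hexagon $v_1\cdots v_6$ with chords $v_1v_3,v_1v_4,v_4v_6$ and a pendant $p$ at $v_1$, one gets $S_{2,3}(v_4,v_1;\{v_3,v_5\},\{v_2,v_6,p\})$.
\item Attaching $M_4$ or $M_5$ at a vertex $c$ of degree $3$ in $M_i$ creates $S_{2,3}(y_k,c;N_{A_k}(y_k),N_{M_i}(c))$.
\end{itemize}
Your case (a) criterion would catch the second failure, but it has to be stated. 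With these choices made explicit and the triangle observation added, cases (a)--(c) all close and the proof is complete.
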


\noindent{\bf Conjecture~6.}~\textit{Let $n$ be a positive integer with $n\ge 1$. We have}
$$ex_{_\mathcal{OP}}(n,S_{2,3})=ex_{_\mathcal{OP}}^{c}(n,S_{2,3})={\left\{\begin{array}{ll}
  \frac{5n-3}{3},& if~n\equiv 0~(\textup{mod}~6); \\
  \frac{5n-5}{3},& if~n\equiv 1~(\textup{mod}~6); \\
  \frac{5n+i-9}{3},& if~n\equiv i~(\textup{mod}~6),~where~i\in\{2,3,4,5\}.
\end{array}\right.}$$

\section{Preliminaries}

For the convenience of subsequent proofs, we denote the double star $S_{p,q}$ by $S_{p,q}(x,y;X,Y)$, where $x$ and $y$ are central vertices of $S_{p,q}$, $X=N_{S_{p,q}}(x)\setminus \{y\}$ and $Y=N_{S_{p,q}}(y)\setminus \{x\}$ with $X\cap Y=\emptyset$~(see Figure $\ref{gds}$). A $(a,b)$-degree edge is an edge whose endpoints are of degree $a$ and $b$, respectively. Obviously, $d_{S_{p,q}}(x)=p+1$ and $d_{S_{p,q}}(y)=q+1$, so $S_{p,q}$ contains a $(p+1,q+1)$-degree edge $xy$.

\begin{figure}[htp]
	\centering
	\includegraphics[width=6cm]{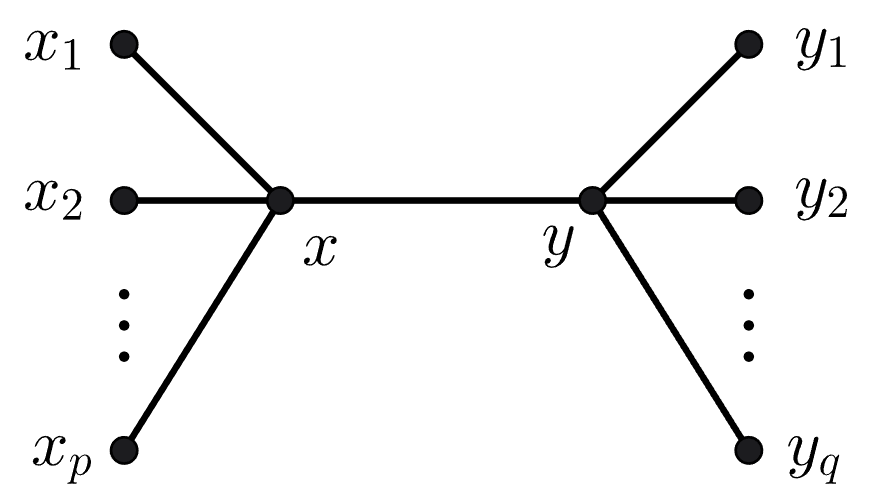}
	\caption{ The graph $S_{p,q}(x,y;X,Y)$, where $X=\{x_{1},x_{2},\ldots,x_{p}\}$ and $Y=\{y_{1},y_{2},\ldots,y_{q}\}$.}
	\label{gds}
\end{figure}

To prove our main theorems, we need the following lemmas concerning outerplanar graphs.

\begin{lem}$ {\emph{\cite{BM}}}$
{\rm{\label{lem1}}}
Let $G$ be an outerplanar graph with at least $3$ vertices. If $G$ is $2$-connected, then $G$ has a Hamilton cycle.
\end{lem}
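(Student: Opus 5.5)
The natural route is to work with a fixed outerplane embedding of $G$ and show that its outer face boundary is a Hamilton cycle. We may assume $G$ is embedded in the plane with every vertex on the boundary of the outer face. Since $G$ is $2$-connected and has at least $3$ vertices, we first invoke the standard fact that in a $2$-connected plane graph every face is bounded by a cycle. This gives a cycle $C$ bounding the outer face. Every vertex of $G$ lies on the boundary of the outer face by outerplanarity, so every vertex lies on $C$. Hence $C$ is a spanning cycle, that is, a Hamilton cycle.

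If we do not want to quote the face-boundary fact, we prove the needed special case directly. Let $W$ be the closed boundary walk of the outer face. Suppose some vertex $v$ appears on $W$ more than once. Then $v$ is incident with the outer face in two distinct angular sectors. A simple closed curve can be drawn inside the outer face, passing through $v$ and otherwise avoiding $G$, that separates the edges in one sector from the edges in the other. Each side contains vertices of $G$, because edges lie in both sectors. So $G-v$ is disconnected, contradicting $2$-connectivity. Therefore $W$ visits each vertex at most once, so $W$ is a cycle. It contains all vertices because all vertices are on the outer face.

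An alternative, purely combinatorial route uses the $K_4$/$K_{2,3}$-minor-free characterization. We take a longest cycle $C$, which exists by $2$-connectivity since $|V(G)|\ge 3$. If some vertex lies off $C$, then by $2$-connectivity (a fan/ear argument) there is a $C$-path $P$ with ends $a\neq b$ on $C$ and interior outside $C$. If $a,b$ are consecutive on $C$, then $C$ can be lengthened, a contradiction. Otherwise $C\cup P$ contains a subdivision of $K_{2,3}$ with branch vertices $a,b$, which is impossible in an outerplanar graph.

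The main obstacle is rigor at the topological step: showing that the outer boundary walk repeats no vertex. I expect the cleanest approach is to cite the standard result that faces of $2$-connected plane graphs are bounded by cycles. This lemma is in fact cited from \cite{BM}, so at that level a short argument along these lines suffices.
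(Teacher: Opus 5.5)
Your proposal is correct. The paper gives no proof of its own and only cites Bondy and Murty, whose standard argument is your first route: in a $2$-connected plane graph every face is bounded by a cycle, so the outer-face cycle of an outerplane embedding passes through every vertex and is a Hamilton cycle. Your longest-cycle argument, which rules out any vertex off the cycle because it would give a $K_{2,3}$-subdivision, is a valid alternative that avoids the embedding.
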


\begin{lem}$ {\emph{\cite{BM}}}$
{\rm{\label{lem2}}}
Let $n$ be a positive integer. If $G$ is a maximal outerplanar graph with $n$ vertices, then $|E(G)|=\emph{max}\{0,2n-3\}$.
\end{lem}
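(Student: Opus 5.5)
We prove Lemma~\ref{lem2} by induction on $n$, using the standard structure of maximal outerplane graphs.

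For $n\le 2$ the statement is immediate. A maximal outerplanar graph on one vertex has no edges, and $\max\{0,2\cdot 1-3\}=0$. On two vertices, maximality forces the single edge, and $2\cdot 2-3=1$. For $n=3$, maximality forces $K_3$, which has $3=2\cdot 3-3$ edges. From now on assume $n\ge 3$.

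The first step is to show that a maximal outerplanar $G$ on $n\ge 3$ vertices is $2$-connected and that, in an outerplane embedding, its outer face is bounded by a Hamilton cycle $C$.
\begin{itemize}
\item If $G$ were disconnected, we could join two components by an edge between vertices on the outer face. The result is still outerplane, contradicting maximality.
\item If $G$ had a cut vertex $v$, take two blocks at $v$ that are consecutive around $v$ on the outer face. Their neighbours of $v$ along the outer boundary can be joined by an edge drawn in the outer face, and every vertex stays on the new outer boundary. This again contradicts maximality.
\end{itemize}
Hence $G$ is $2$-connected, and Lemma~\ref{lem1} (or the embedding directly) gives the Hamiltonian outer cycle $C$.

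Next, every bounded face is a triangle. Suppose a bounded face $F$ had length at least $4$. Then it has two nonconsecutive vertices on $F$, and they are nonadjacent in $G$: otherwise the edge between them would separate the outerplane drawing. Adding this chord inside $F$ keeps all vertices on the outer face, contradicting maximality.

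The main step is to count edges. Since the bounded faces are triangles and $G$ is a triangulated polygon, some triangle uses two consecutive edges $uv, vw$ of $C$; this gives a vertex $v$ of degree $2$. To see this, take the chords inside $C$ and choose an innermost one, i.e. a chord cutting off a region containing no further chord. That region is a single triangular face with two of its edges on $C$. If $G$ has no chords at all, then $G=C=K_3$.

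Deleting $v$ leaves $G-v$, which is outerplanar on $n-1$ vertices. It is also maximal. Indeed, it is still a triangulated polygon whose outer cycle is $C-v+uw$. Any added edge would create either a face that is not a triangle's refinement or a crossing, and an outerplane graph whose outer face is a Hamilton cycle and whose inner faces are all triangles cannot receive another edge.

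By induction, $|E(G-v)|=2(n-1)-3$. Therefore
$$|E(G)|=2(n-1)-3+2=2n-3.$$

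An alternative count uses Euler's formula. Let $f$ be the number of faces. The inner faces are $f-1$ triangles, and the outer face has length $n$. Counting edge–face incidences gives
$$2|E|=3(f-1)+n,$$
and Euler's formula gives $n-|E|+f=2$. Solving these yields $|E|=2n-3$ directly.

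The main obstacle is the maximality argument of the first step, namely that $G$ is $2$-connected with triangular inner faces. The subtle point is justifying that the nonconsecutive vertices of a long face are nonadjacent in an outerplane embedding. Once this structure is established, the count is routine.
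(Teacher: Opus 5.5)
The paper does not prove this lemma; it quotes it from Bondy--Murty, so there is no argument of the authors' to compare with. Your proof is the standard textbook one: show that a maximal outerplanar graph on $n\ge 3$ vertices is a triangulated polygon, then count edges. It is correct, with one caveat about the inductive count.

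When you claim $G-v$ is maximal outerplanar, you only show that the drawing inherited from $G$ cannot absorb another edge. Maximality is a property of the abstract graph, so you must also exclude every other outerplane drawing of $G-v+ab$. The claim is true, but it needs an argument. For instance, $G-v+ab$ would be $2$-connected with Hamilton cycle $C-v+uw$. A $2$-connected outerplanar graph has a unique Hamilton cycle, so that cycle bounds the outer face in any outerplane drawing. Then $ab$ must be a chord of that cycle, and it must cross an existing chord, because the chords of $G-v$ cut the disc into triangles.

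The simplest repair is to avoid needing maximality of $G-v$:
\begin{itemize}
\item either run the induction over triangulated polygons, a class that is closed under deleting a degree-$2$ vertex;
\item or just use your Euler-formula count. It requires only the structure from your first two steps and is complete as it stands.
\end{itemize}

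Finally, your non-adjacency claim in the second step is only gestured at. The precise version: the edge $w_1w_3$, together with an arc through $F$ from $w_3$ to $w_1$, forms a Jordan curve separating $w_2$ from $w_4$. So one of them is enclosed and cannot lie on the outer face.
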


\begin{lem}$ {\emph{\cite{WDB}}}$
{\rm{\label{lem3}}}
Let $r$ be a positive integer. A connected graph with blocks $G_{1},G_{2}, \ldots, G_{r}$ has $ \sum_{i=1}^{r} |V(G_{i})| - r + 1 $ vertices.
\end{lem}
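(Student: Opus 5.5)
The plan is to double count vertex–block incidences, using the block–cutvertex tree of $G$. Let $G$ be connected with blocks $G_1,\ldots,G_r$. For each vertex $v$, let $b(v)$ be the number of blocks containing $v$. Every vertex lies in at least one block. Two distinct blocks share at most one vertex, and a vertex lies in two or more blocks exactly when it is a cut vertex. Hence $b(v)=1$ for non-cut vertices and $b(v)\ge 2$ for cut vertices. Counting the pairs $(v,G_i)$ with $v\in V(G_i)$ in two ways gives
$$\sum_{i=1}^{r}|V(G_i)|=\sum_{v\in V(G)}b(v)=|V(G)|+\sum_{v\ \text{cut vertex}}\bigl(b(v)-1\bigr).$$
So it suffices to show that $\sum_{v}(b(v)-1)=r-1$, where the sum runs over the cut vertices.

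For this I will use the block–cutvertex graph $T$. Its vertex set is the $r$ blocks together with the $c$ cut vertices of $G$, and a cut vertex $v$ is joined to a block $B$ whenever $v\in V(B)$. The number of edges of $T$ is therefore exactly $\sum_{v}b(v)$, summed over the cut vertices.

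The key step, and the only one requiring an argument, is that $T$ is a tree.
\begin{itemize}
\item $T$ is connected. Any two blocks are linked by a path in $G$, and consecutive blocks met along this path share a cut vertex, which gives a walk in $T$. Each cut vertex is adjacent in $T$ to a block containing it.
\item $T$ is acyclic. Suppose $T$ contains a cycle $B_1v_1B_2v_2\cdots B_kv_kB_1$ with $k\ge 2$. Then the union of $B_1,\ldots,B_k$ is a connected subgraph in which removing any single vertex leaves it connected. Indeed, each $B_j$ is nonseparable, and the cyclic arrangement provides an alternative route around any removed vertex. So this union is nonseparable and strictly larger than $B_1$, contradicting the maximality of $B_1$.
\end{itemize}
This maximality argument is where I expect the main care to be needed. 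It can alternatively be cited as the standard fact from \cite{BM}.

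Given that $T$ is a tree on $r+c$ vertices, it has $r+c-1$ edges, so $\sum_{v}b(v)=r+c-1$ over the cut vertices. Therefore
$$\sum_{v}\bigl(b(v)-1\bigr)=(r+c-1)-c=r-1.$$
Substituting into the double count yields $|V(G)|=\sum_{i=1}^{r}|V(G_i)|-r+1$. The case $r=1$ (no cut vertices) is consistent with this formula, since both sides equal $|V(G_1)|$.

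An equivalent route, should the tree argument be awkward to present, is induction on $r$. Pick an endblock $B$, which exists as a leaf block of $T$, with unique cut vertex $v$. Delete $V(B)\setminus\{v\}$. The resulting graph is connected, its blocks are the remaining $r-1$ blocks, and it has $|V(B)|-1$ fewer vertices, so the formula follows from the induction hypothesis.
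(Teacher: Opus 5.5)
There is no proof in the paper to compare against: the lemma is simply quoted from a textbook reference. Your proposal is therefore a self-contained proof, and it is correct.

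Your main route has two steps. You double count vertex--block incidences to reduce everything to $\sum_{v}(b(v)-1)=r-1$, and you read this off from the edge count of the block--cutvertex tree. This shows exactly where the correction $-r+1$ comes from, namely the excess multiplicity of the cut vertices. It also generalizes at once: if $G$ has $m$ components, $T$ is a forest with $m$ components, and the same count gives $|V(G)|=\sum_{i}|V(G_i)|-r+m$.

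Your alternative (strip a leaf block and induct on $r$) is the more common textbook argument, but it is not independent of the first one. The existence of an endblock is itself deduced from $T$ being a tree: a leaf of $T$ must be a block, since every cut vertex lies in at least two blocks. You also still need to check that the blocks of $G-(V(B)\setminus\{v\})$ are exactly the other $r-1$ blocks. This holds because the non-cut vertices of $B$ lie in no other block.

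The one step to sharpen is acyclicity, where ``the cyclic arrangement provides an alternative route'' should be spelled out. Take any vertex $w$ of $U=B_1\cup\cdots\cup B_k$.
\begin{itemize}
\item Each $B_j-w$ is nonempty and connected: no block is $K_1$ once $r\ge 2$, and a block has no cut vertex.
\item Consecutive pieces $B_j-w$ and $B_{j+1}-w$ (indices mod $k$) still share $v_j$ unless $w=v_j$.
\item Since the $v_j$ are distinct, at most one of the $k$ cyclic links is lost, so $U-w$ is connected.
\item $U\neq B_1$, because no block is contained in another block.
\end{itemize}
With these points, $U$ is a nonseparable subgraph properly containing $B_1$, which is the contradiction you want.
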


\begin{lem}
{\rm{\label{lem12}}}
Let $n$ be an integer with $1\le n\le p+q+1$. We have $ex_{_\mathcal{OP}}(n,S_{p,q})=ex_{_\mathcal{OP}}^{c}(n,S_{p,q}) = \emph{max}\{0,2n-3\}$.
\end{lem}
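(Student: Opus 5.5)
The plan is to sandwich $ex_{_\mathcal{OP}}(n,S_{p,q})$ and $ex_{_\mathcal{OP}}^{c}(n,S_{p,q})$ between the same lower and upper bound.

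For the upper bound, every outerplanar graph on $n$ vertices is a spanning subgraph of some maximal outerplanar graph on $n$ vertices. Lemma~\ref{lem2} then gives at most $\max\{0,2n-3\}$ edges. This bound holds whether or not the graph is connected or $S_{p,q}$-free, so
$$ex_{_\mathcal{OP}}^{c}(n,S_{p,q})\le ex_{_\mathcal{OP}}(n,S_{p,q})\le \max\{0,2n-3\}.$$

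For the lower bound, I will exhibit a connected $S_{p,q}$-free outerplanar graph with exactly $\max\{0,2n-3\}$ edges. The key observation is that $S_{p,q}$ has $p+q+2$ vertices. Hence no graph on $n\le p+q+1$ vertices can contain a copy of $S_{p,q}$, and every graph on this many vertices is automatically $S_{p,q}$-free.

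\begin{itemize}
\item For $n\ge 2$, take any maximal outerplanar graph in $\mathcal{M}_n$, for instance the fan $K_1+P_{n-1}$. It is outerplanar and connected, and by Lemma~\ref{lem2} it has $2n-3$ edges.
\item For $n=1$, take $K_1$. It is connected and has $0=\max\{0,2n-3\}$ edges.
\end{itemize}

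In both cases we get $ex_{_\mathcal{OP}}(n,S_{p,q})\ge ex_{_\mathcal{OP}}^{c}(n,S_{p,q})\ge \max\{0,2n-3\}$. Combining this with the upper bound proves the lemma.

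There is no real obstacle here. The only points to check are the vertex count $|V(S_{p,q})|=p+q+2$ and the degenerate case $n=1$, where $\max\{0,2n-3\}=0$ must be used in place of $2n-3=-1$.
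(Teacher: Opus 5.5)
Your proposal is correct and follows essentially the same route as the paper: any maximal outerplanar graph on $n\le p+q+1$ vertices is automatically $S_{p,q}$-free because $|V(S_{p,q})|=p+q+2>n$, and Lemma~\ref{lem2} then gives the value. You also spell out the steps the paper leaves implicit: the upper bound via extending to a maximal outerplanar graph, connectedness of the extremal example (the fan), and the degenerate case $n=1$.
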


\begin{proof}[\textbf{Proof}]
Since $|S_{p,q}|=p+q+2>n$, we get that any maximal outerplanar graph with $n$ vertices is $S_{p,q}$-free. Therefore, Lemma~\ref{lem12} is trivially true by Lemma~\ref{lem2}.
\end{proof}

\begin{lem}
{\rm{\label{lem11}}}
Let $G$ be an $S_{2,2}$-free outerplanar graph with at least 6 vertices. For any $x\in V(G)$ and $y\in V(G)$, if $xy\in E(G)$, $d_{G}(x)\ge 3$ and $d_{G}(y)\ge 3$, then $|N_{G}(x)\cap N_{G}(y)|\ge 1$.
\end{lem}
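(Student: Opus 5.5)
The plan is a direct proof by contradiction. I expect it to need neither outerplanarity nor the assumption $|V(G)|\ge 6$; the degree conditions alone should produce a copy of $S_{2,2}$.

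Suppose, for contradiction, that $xy\in E(G)$, $d_G(x)\ge 3$, $d_G(y)\ge 3$, and $N_G(x)\cap N_G(y)=\emptyset$. Set $A=N_G(x)\setminus\{y\}$ and $B=N_G(y)\setminus\{x\}$.

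Since $d_G(x)\ge 3$ and $y\in N_G(x)$, we have $|A|\ge 2$; likewise $|B|\ge 2$. The vertex $x$ is not in $A$ because the graph is simple, and $x$ is not in $B$ by the definition of $B$. Similarly $y\notin A\cup B$. Since $A\subseteq N_G(x)$ and $B\subseteq N_G(y)$, the assumption $N_G(x)\cap N_G(y)=\emptyset$ gives $A\cap B=\emptyset$.

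Choose distinct $x_1,x_2\in A$ and distinct $y_1,y_2\in B$. Then $x,y,x_1,x_2,y_1,y_2$ are six distinct vertices. The edges $xy$, $xx_1$, $xx_2$, $yy_1$, $yy_2$ all lie in $G$, so they form a copy of $S_{2,2}(x,y;\{x_1,x_2\},\{y_1,y_2\})$ in the notation of the Preliminaries. This contradicts that $G$ is $S_{2,2}$-free, so $|N_G(x)\cap N_G(y)|\ge 1$.

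The only step needing care is checking that the six vertices are pairwise distinct, since that is exactly where the assumption of no common neighbour is used. I expect no further obstacle; the outerplanarity and order hypotheses are presumably included only because of how the lemma is applied later.
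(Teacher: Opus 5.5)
Your proof is correct and is the same argument as the paper's: assuming no common neighbour, two further neighbours of each endpoint yield an $S_{2,2}(x,y;\{x_1,x_2\},\{y_1,y_2\})$. Your explicit check that the six vertices are distinct is a useful addition, and you are right that neither outerplanarity nor $|V(G)|\ge 6$ is used.
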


\begin{proof}[\textbf{Proof}]
By contradiction. Suppose that $|N_{G}(x)\cap N_{G}(y)|=0$, $\{x_1, x_2\}\subseteq N_G(x)\setminus\{y\}$ and $\{y_1, y_2\}\subseteq N_G(y)\setminus\{x\}$, where $\{x_1, x_2, y_1, y_2\} \subseteq V(G)$. Thus, $G$ contains an $S_{2,2}(x,y;\{x_{1},x_{2}\},\{y_{1},y_{2}\})$, contradicting $G$ is $S_{2,2}$-free. Therefore, $|N_{G}(x)\cap N_{G}(y)|\ge 1$.
\end{proof}

\begin{lem}
{\rm{\label{m5}}}
Let $G$ be an $S_{2,2}$-free connected outerplanar graph. If $|V(G)|\ge 6$, then $G$ is $M_{5}$-free.
\end{lem}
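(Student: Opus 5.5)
The plan is to argue by contradiction. I assume $G$ contains a copy $H$ of $M_5$ as a subgraph and exhibit an explicit copy of $S_{2,2}$ in $G$; this contradicts that $G$ is $S_{2,2}$-free. The argument only uses the fixed structure of $M_5$ together with connectivity and $|V(G)|\ge 6$.

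First I fix the structure of $M_5$. Its vertices lie on a Hamilton cycle $v_1v_2v_3v_4v_5v_1$ (Lemma~\ref{lem1}), and since $|E(M_5)|=7$ by Lemma~\ref{lem2}, it has exactly two chords. Outerplanarity forces these two chords to share an endpoint. Up to relabeling, $H$ has edges
$$v_1v_2,\ v_2v_3,\ v_3v_4,\ v_4v_5,\ v_5v_1,\ v_1v_3,\ v_1v_4 .$$
The map $v_2\leftrightarrow v_5$, $v_3\leftrightarrow v_4$ (fixing $v_1$) is an automorphism of $H$.

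Next, since $|V(G)|\ge 6>|V(H)|$ and $G$ is connected, there is a vertex $u\notin V(H)$ adjacent to some vertex $w\in V(H)$. Adjacencies in $G$ beyond those of $H$ do not matter, since we only need a subgraph. By the symmetry it suffices to treat $w\in\{v_1,v_2,v_3\}$. In each case I write down the double star $S_{2,2}(x,y;X,Y)$ using the edges listed above:
\begin{itemize}
\item $w=v_1$: take $S_{2,2}(v_1,v_3;\{u,v_5\},\{v_2,v_4\})$.
\item $w=v_3$: take $S_{2,2}(v_3,v_1;\{u,v_2\},\{v_4,v_5\})$, and symmetrically for $w=v_4$.
\item $w=v_2$: take $S_{2,2}(v_2,v_1;\{u,v_3\},\{v_4,v_5\})$, and symmetrically for $w=v_5$.
\end{itemize}
In every case the six vertices are distinct because $u\notin V(H)$. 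All required edges are present in $H$ or are the edge $uw$. Hence $G$ contains $S_{2,2}$, a contradiction.

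There is no real obstacle here; the only point needing care is justifying the canonical labeling of $M_5$ (that the two chords share an endpoint). After that, the case check is routine.
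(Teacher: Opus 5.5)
Your proposal is correct and matches the paper's proof: both take a vertex outside the copy of $M_5$ adjacent to it (by connectivity and $|V(G)|\ge 6$), and with $v_1$ as the apex your three double stars are exactly the paper's, with the remaining cases handled by the same symmetry. The one addition is your argument that the two chords of $M_5$ must share an endpoint, which makes explicit the labeling the paper takes from a figure.
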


\begin{proof}[\textbf{Proof}]
Suppose not. Let $H_{1}\subseteq G$, $H_{1}\cong M_{5}$ and $V(H_{1})=\{u_{1},u_{2},\ldots,u_{5}\}$~(see Figure~\ref{figurem5}). Since $|V(G)| \ge 6$ and $G$ is connected, there must exist a vertex $u$ in $V(G) \setminus V(H_{1})$ such that $u$ is adjacent to some vertex in $H_{1}$. If $uu_{1}\in E(G)$, then $G$ contains $S_{2,2}(u_{1},u_{3};\{u,u_{5}\},\{u_{2},u_{4}\})$, a contradiction. If $uu_{i}\in E(G)$ for $i\in \{2,3\}$, then $G$ contains $S_{2,2}(u_{i},u_{1};\{u,u_{5-i}\},\{u_{4},u_{5}\})$, a contradiction. By the symmetry of $H_{1}$, all other cases (e.g., $u$ is adjacent to $u_4$ or $u_5$) lead to the same contradiction. Thus, $G$ is $M_{5}$-free, as desired.
\end{proof}
\begin{figure}[htbp]
    \centering
    \subfigure[~The graph $H_{1}$.]{
        \includegraphics[width=4.2cm]{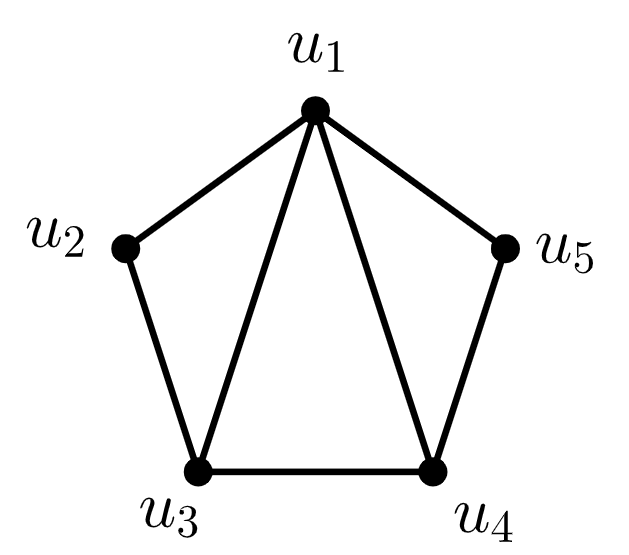}
        \label{figurem5}
    }
    \hspace{0.05\textwidth} 
    \subfigure[~The graph $H_{2}$.]{
        \includegraphics[width=4.2cm]{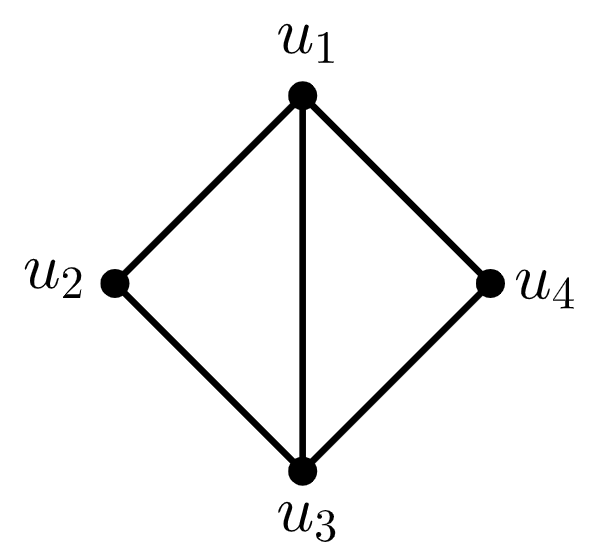}
        \label{figurem4}
    }
    \caption{~The graphs $H_{1}$ and $H_{2}$.}
\end{figure}

\begin{lem}
{\rm{\label{m4}}}
Let $G$ be an $S_{2,2}$-free connected outerplanar graph with $|V(G)|\ge 6$. If a block $B$ of $G$ is adjacent to a block that is isophormic to $M_{4}$, then we have

{\noindent}\emph{(1)}~$B\cong M_{2}$;

{\noindent}\emph{(2)}~$B$ is adjacent to at most two blocks. Further, if $B$ is adjacent to two blocks, then the another block adjacent to $B$ is $M_2$.
\end{lem}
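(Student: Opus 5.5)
We fix notation for the $M_4$ block and then exhibit an explicit $S_{2,2}$ in every forbidden configuration, in the same style as the proof of Lemma~\ref{m5}. Let $A\cong M_4$ be the block adjacent to $B$, with $V(A)=\{u_1,u_2,u_3,u_4\}$, Hamilton cycle $u_1u_2u_3u_4u_1$ and chord $u_1u_3$. Then $u_1,u_3$ have degree $3$ in $A$ and $u_2,u_4$ have degree $2$. Let $v$ be the common cut vertex of $A$ and $B$. By the symmetry of $M_4$ it suffices to treat $v=u_1$ (a chord endpoint) and $v=u_2$ (a degree-$2$ vertex of $A$). Since $G$ is connected with $|V(G)|\ge 6$, $B$ has at least two vertices.

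For (1), suppose $B\not\cong M_2$. Then $B$ is $2$-connected with at least $3$ vertices, so $v$ has two neighbours $w_1,w_2$ in $B$, both outside $A$. If $v=u_1$, then $G$ contains $S_{2,2}(u_1,u_3;\{w_1,w_2\},\{u_2,u_4\})$. If $v=u_2$, then $G$ contains $S_{2,2}(u_2,u_1;\{w_1,w_2\},\{u_3,u_4\})$. In both cases the leaf sets are disjoint because $V(A)\cap V(B)=\{v\}$. This contradicts $S_{2,2}$-freeness, so $B\cong M_2$, say $B=vw$.

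For (2), the key observation is that $v$ has at least two neighbours in $A$, and in each case these can serve as the leaves of a suitable centre. First, at $v$ no block other than $A$ and $B$ can occur. Otherwise $v$ has a neighbour $z\notin V(A)\cup\{w\}$, and $G$ contains $S_{2,2}(u_1,u_3;\{w,z\},\{u_2,u_4\})$ or $S_{2,2}(u_2,u_1;\{w,z\},\{u_3,u_4\})$, according as $v=u_1$ or $v=u_2$. Second, at $w$ consider the blocks other than $B$. If there are two of them, or one that is not $M_2$, then $w$ has two neighbours $z_1,z_2$ outside $\{v\}\cup V(A)$. Taking two neighbours $a,a'$ of $v$ in $A$ gives $S_{2,2}(w,v;\{z_1,z_2\},\{a,a'\})$. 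Here disjointness holds since blocks meet only in cut vertices and $w\notin V(A)$. Hence $B$ is adjacent to $A$ and to at most one further block, which lies at $w$ and is isomorphic to $M_2$.

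I do not expect a serious obstacle. The only step needing care is checking, in each case, that the chosen leaves are distinct and avoid the centres. This follows from the block structure: distinct blocks share at most one vertex, and that vertex is a cut vertex. It also needs the precise reading of ``adjacent to $B$'' from the definitions, namely blocks sharing exactly one vertex with $B$.
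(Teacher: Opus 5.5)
Your proof is correct and follows the paper's argument. Part (1), and the fact that $v$ lies only in $A$ and $B$, are the paper's Claim~\ref{cla0} (the cut vertex has at most one neighbour outside the $M_4$ block); your bound at $w$ is the paper's Claim~\ref{cla00}, $d_G(v_c')\le 2$.

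The only difference is that you exhibit $S_{2,2}(w,v;\{z_1,z_2\},\{a,a'\})$ directly, whereas the paper invokes Lemma~\ref{lem11} and then places the common neighbour outside $H_2$. One justification needs more than ``blocks meet in cut vertices'': to get $z_i\notin V(A)$, note that if $z_i\in V(A)\setminus\{v\}$, the edge $vw$ would lie on a cycle through $A$, putting $B$ and $A$ in one block.
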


\begin{proof}[\textbf{Proof}]

Let $H_{2}\subseteq G$, $H_{2}\cong M_{4}$, $V(H_{2})=\{u_{1},u_{2},u_{3},u_{4}\}$~(see Figure~\ref{figurem4}) and $H_{2}$ is adjacent to $B$. We first prove the following claim.
\begin{cla}
{\rm{\label{cla0}}}
For any $v\in V(H_{2})$, we have $d(v,G\setminus V(H_{2}))\le 1$.
\end{cla}
\textit{Proof}.~By contradiction. Suppose that there exists some $v$ in $V(H_{2})$ such that $d(v,G\setminus V(H_{2}))\ge 2$. Let $\{v_{1}, v_{2}\} \subseteq N(v,G\setminus V(H_{2}))$. If $v=u_{i}$~($i\in \{1,2\}$), then there exists an $S_{2,2}(u_{i},u_{3};\{v_{1},v_{2}\},\{u_{3-i},u_{4}\})$ in $G$, a contradiction. By the symmetry of $H_{2}$, all choices of $v$ lead to the similar contradiction. Thus, Claim~\ref{cla0} holds.
$\hfill \square$
\bigskip

Let $v_c$ be the unique cut vertex shared by $H_{2}$ and $B$, and let $v_{c}'\in N_{B}(v_{c})$. By Claim~\ref{cla0}, $d(v_{c},G\setminus V(H_{2}))\le 1$. We present the next claim.
\begin{cla}
{\rm{\label{cla00}}}
$d_{G}(v_{c}')\le 2$.
\end{cla}
\textit{Proof}.~By contradiction. Suppose that $d_{G}(v_{c}')\ge 3$. Combining this with $\delta(H_{2})=2$, we obtain $d_{G}(v_{c})\ge 3$. By Lemma~\ref{lem11}, $|N_{G}(v_{c})\cap N_{G}(v_{c}')|\ge 1$. Since $v_{c}$ is a cut vertex, we have $d(v_{c},G\setminus V(H_{2}))\ge 2$, contradicting $d(v_{c},G\setminus V(H_{2}))\le 1$. Thus, Claim~\ref{cla00} holds.
$\hfill \square$
\bigskip

Since $V(B)\cap V(H_{2})=\{v_c\}$ and $d(v_{c},G\setminus V(H_{2}))\le 1$, we have $|V(B)| \le 2$. So $B\cong M_{2}$, the conclusion (1) holds. By Claim~\ref{cla00}, $v_{c}'$ can be adjacent to at most one block $B'$ other than $B$. Further, if $B'$ exists, then $B'\cong M_{2}$. Notice that $v_c$ is adjacent to exactly one block other than $B$ and $B$ only contains the two vertices $v_{c}$ and $v_{c}'$. We can easily conclude that conclusion (2) holds.
\end{proof}

\begin{lem}
{\rm{\label{lemjg1}}}
Let $G$ be an $S_{2,2}$-free outerplanar graph with $n$ vertices. If $n\ge 6$ and $G$ is $2$-connected, then $|E(G)|\le \lfloor\frac{5n}{4}\rfloor$.
\end{lem}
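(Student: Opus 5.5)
By Lemma~\ref{lem1}, $G$ has a Hamilton cycle $C=v_1v_2\cdots v_nv_1$. Fix an outerplane embedding with $C$ as the boundary of the outer face. Then every edge of $G$ not on $C$ is a chord of $C$, and these chords are pairwise non-crossing. Write $c$ for the number of chords, so that $|E(G)|=n+c$. It then suffices to show $c\le \lfloor n/4\rfloor$, or equivalently $4c\le n$.

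The plan is a charging argument in which each chord receives at least four private vertices of $C$. First I record the local structure forced by $S_{2,2}$-freeness. Let $uv$ be a chord. Both endpoints have degree at least $3$, so by Lemma~\ref{lem11} they have a common neighbour $w$, and hence $uvw$ is a triangle. Since $n\ge 6$, Lemma~\ref{m5} forbids $M_5$ as a subgraph. In addition, the argument of Claim~\ref{cla0} shows that any $M_4$ (a $4$-cycle with one chord) admits at most one outside neighbour per vertex.

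Next I examine two chords $uv$ and $uv'$ sharing the endpoint $u$. Then $d(u)\ge 4$. The triangles on $uv$ and $uv'$, together with the neighbours of $u$ along $C$, produce a copy of $S_{2,2}$ centred on $u$ and a neighbour of $v$, unless the configuration is very tight. I expect the tight cases to be forced to look like $M_4$ or $M_5$, and the latter is excluded. The same reasoning should show that each vertex of degree at least $4$ forces its chord neighbours to have degree exactly $3$, and that a degree-$3$ vertex adjacent to a chord endpoint cannot have both of its other neighbours of high degree. Consequently, the weak dual tree of the outerplane embedding, whose nodes are the inner faces, must have a very sparse shape: every triangular face containing a chord has its remaining boundary vertices of degree $2$, or is glued to at most one further triangle.

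For the counting I would cut $C$ along all the chords and assign to each chord $e$ the degree-$2$ vertices lying on the boundary of the region cut off on one side of $e$. I would choose the side to be the one away from the other chords, and work through the tree structure from the leaves of the weak dual. The aim is to show that every chord gets at least two degree-$2$ vertices plus its two endpoints, with each endpoint counted with weight $1/2$ or $1$ according to how many chords it meets. Summing over all chords would then give $4c\le n$.

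I expect the main obstacle to be the case analysis around vertices incident to two or more chords. There the bound is tight: a chain of $M_4$-like pieces achieves roughly $n/4$ chords. Moreover, $2$-connectivity prevents a naive reduction via Lemma~\ref{m4}, so each possible shape of an inner face must be checked directly against the explicit copies of $S_{2,2}$ it would create.
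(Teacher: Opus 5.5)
Your proposal is a plan rather than a proof. The two steps that carry the whole bound are only anticipated (``I expect'', ``should show''), and the structure you anticipate is not the true one. First, no vertex of degree at least $4$ can exist, and for $n\ge 6$ there are no ``tight'' exceptions. Suppose $v_1$ has chord neighbours $v_i,v_j$ with $3\le i<j\le n-1$. Then $G$ contains $S_{2,2}(v_1,v_i;\{v_2,v_n\},\{v_{i-1},v_{i+1}\})$ when $i>3$, and the analogous copy on the edge $v_1v_j$ when $j<n-1$. When $i=3$ and $j=n-1$, it contains $S_{2,2}(v_1,v_3;\{v_{n-1},v_n\},\{v_2,v_4\})$. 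This last copy degenerates only if $v_4=v_{n-1}$, which happens only for $n=5$ (the fan $M_5$). So $\Delta(G)\le 3$, and every vertex lies on at most one chord. Together with Lemma~\ref{lem11}, this forces every chord to have the form $v_iv_{i+2}$ with $d_G(v_{i+1})=2$. Your weight-$\frac12$ bookkeeping for endpoints meeting two chords therefore addresses a situation that cannot occur. Even if it could occur, it would give only $2+\frac12+\frac12=3$ vertices per chord, not $4$.

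Second, and more seriously, your charging does not reach $4$ vertices per chord even with this structure. The region cut off by $v_iv_{i+2}$ on the side away from the other chords is the triangle $v_iv_{i+1}v_{i+2}$. It contains exactly one degree-$2$ vertex, so your rule yields $3$ vertices per chord, i.e.\ only $c\le n/3$. The local facts you list do not rule out this density. Take $C_6$ with chords $v_1v_3$ and $v_4v_6$: each chord has a common neighbour, and the graph contains no $M_4$, no $M_5$ and no vertex of degree $4$. Yet it has $8>\lfloor 5\cdot 6/4\rfloor$ edges. It is excluded only because $v_3v_4$ is a \emph{cycle} edge joining two degree-$3$ vertices that have no common neighbour.

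The missing idea is to apply Lemma~\ref{lem11} to cycle edges. Suppose $v_\ell$ and $v_{\ell+1}$ both have degree $3$. Then they have a common neighbour $v_r$. Outerplanarity forbids $r\in\{\ell-1,\ell+2\}$, since a chord $v_\ell v_{\ell+2}$ would enclose $v_{\ell+1}$ and block its third edge. Then $v_r$ is adjacent to $v_{r-1},v_{r+1},v_\ell,v_{\ell+1}$, contradicting $\Delta(G)\le 3$. Hence degree-$3$ vertices are pairwise non-consecutive on $C$, so $|V_3(G)|\le\lfloor n/2\rfloor$. The identity $2|E(G)|=2n+|V_3(G)|$ then gives the bound at once, and no weak-dual charging is needed.
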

\begin{proof}[\textbf{Proof}]
By Lemma $\ref{lem1}$, $G$ contains a Hamilton cycle, denoted by $C=v_{1}v_{2}\ldots v_{n}v_{1}$. We first show that $\Delta(G)\le 3$. By contradiction. Suppose that there exists a vertex of degree at least $4$. Without loss of generality, assume that $d_{G}(v_{1})\ge 4$ and $\{v_{i},v_{j}\}\subseteq N_{G}(v_{1})\setminus \{v_{2},v_{n}\}$, where $3\le i<j\le n-1$. If $i>3$, then there exists an $S_{2,2}(v_{1},v_{i};\{v_{2},v_{n}\},\{v_{i-1},v_{i+1}\})$ in $G$ as $i-1>2$ and $i+1<n$, contradicting $G$ is $S_{2,2}$-free. If $j<n-1$, then there exists an $S_{2,2}(v_{1},v_{j};\{v_{2},v_{n}\},\{v_{j-1},v_{j+1}\})$ in $G$ as $j-1>2$ and $j+1<n$, a contradiction. If $i=3$ and $j=n-1$, then there exists an $S_{2,2}(v_{1},v_{3};\{v_{n-1},v_{n}\},\{v_{2},v_{4}\})$ in $G$ as $n\ge 6$, a contradiction. So $\Delta(G)\le 3$.

Let $V_{i}(G)=\{v|d_{G}(v)=i,v\in V(G)\}$, where $i\in \{2,3\}$. We next show that $|V_{3}(G)|\le \lfloor\frac{n}{2}\rfloor$. Notice that for each $k\in [n]$, $\{v_{k-1},v_{k+1}\}\subseteq N_{G}(v_{k})$, where $v_{0}=v_{n}$ and $v_{n+1}=v_{1}$. Assume that $v_{\ell}\in V_{3}(G)$, where $\ell\in [n]$. We are going to prove that $v_{\ell+1}\notin V_{3}(G)$. Suppose, for contradiction, that $v_{\ell+1}\in V_{3}(G)$. By $v_{\ell}v_{\ell+1}\in E(G)$ and Lemma $\ref{lem11}$, $|N_{G}(v_{\ell})\cap N_{G}(v_{\ell+1})|\ge 1$. Let $v_{r}\in N_{G}(v_{\ell})\cap N_{G}(v_{\ell+1})$, where $v_{n+2}=v_{2}$. Notice that $v_{\ell}v_{\ell+2}\notin E(G)$ and $v_{\ell-1}v_{\ell+1}\notin E(G)$ as $G$ is outerplanar and $\{v_{\ell},v_{\ell+1}\}\subseteq V_{3}(G)$. Hence $r \in [n]\setminus \{\ell-1,\ell,\ell+1,\ell+2\}$. So $\{v_{r-1},v_{r+1},v_{\ell},v_{\ell+1}\}\subseteq N_{G}(v_{r})$ and thus $d_{G}(v_{r})\ge 4$, contradicting $\Delta(G)\le 3$. Thus, $v_{\ell+1}\notin V_{3}(G)$. Therefore, $|V_{3}(G)|\le \lfloor\frac{n}{2}\rfloor$.

By the Handshaking Lemma, $2|E(G)|=\underset{v\in V(G)}{\sum}d_{G}(v)=2|V_{2}(G)|+3|V_{3}(G)|=2(n-|V_{3}(G)|)+3|V_{3}(G)|=2n+|V_{3}(G)|\le 2n+\lfloor\frac{n}{2}\rfloor\le \frac{5n}{2}$. Thus, $|E(G)|\le \lfloor\frac{5n}{4}\rfloor$.
\end{proof}

\section{Proof of Theorem~\ref{jg1}}
Let $h(n)=\lfloor\frac{3(n-1)}{2}\rfloor$. We can easily conclude that
\begin{equation}
{\rm{\label{zhi}}}
h(n)={\left\{\begin{array}{ll}
  \frac{3n}{2}-2,& \text {if}~n~\text{is~even}, \\
  \frac{3(n-1)}{2},& \text {if}~n~\text{is~odd}.\\
\end{array}\right.}
\end{equation}
We are going to prove that $ex_{_\mathcal{OP}}^{c}(n,S_{2,2})=h(n)$ for all $n\ge 6$. To show that $ex_{_\mathcal{OP}}^{c}(n,S_{2,2})\ge h(n)$, we merely need to construct a connected $S_{2,2}$-free outerplanar graph with $n$ vertices and $h(n)$ edges. Let $G_{n}=K_{1}+(\lfloor \frac{n-1}{2}\rfloor K_{2} \cup \varepsilon K_{1})$, where $\varepsilon \equiv n-1~(\textup{mod}~2)$~(see Figure~\ref{figuregn}). Obviously, $G_{n}$ is a connected outerplanar graph. By the construction of $G_{n}$ and equation (\ref{zhi}), $|V(G_{n})|=1+2\lfloor \frac{n-1}{2}\rfloor +\varepsilon=n$, $|E(G_{n})|=\lfloor\frac{n-1}{2}\rfloor+(n-1)=\lfloor\frac{3(n-1)}{2}\rfloor=h(n)$. Notice that $G_{n}$ contains only one vertex of degree at least $3$. Thus, $G_{n}$ is $S_{2,2}$-free. Therefore, $G_{n}$ is the desired graph, and $ex_{_\mathcal{OP}}^{c}(n,S_{2,2})\ge h(n)$.
\begin{figure}[htbp]
    \centering
    \label{figuregn}
    \subfigure[~When $n=12$.]{
        \includegraphics[width=4.2cm]{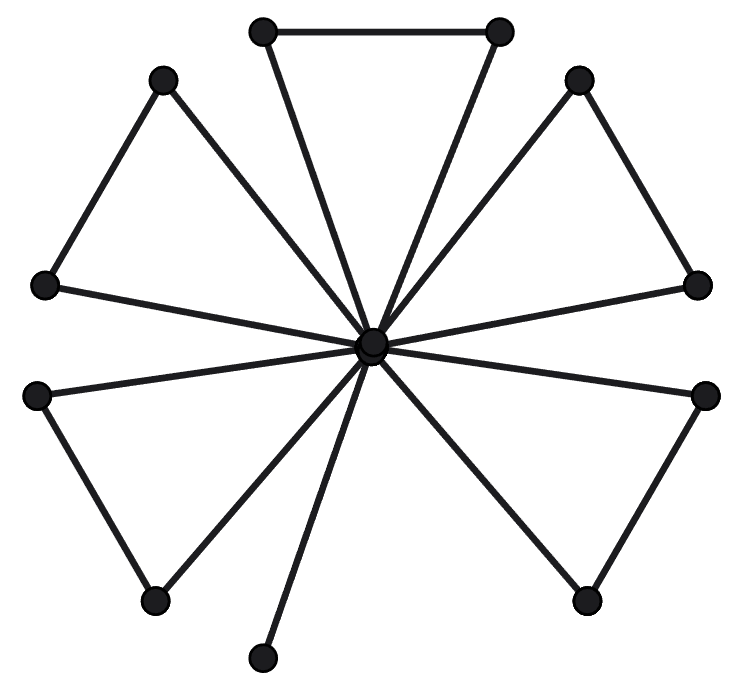}
    }
    \hspace{0.05\textwidth} 
    \subfigure[~When $n=13$.]{
        \includegraphics[width=4.2cm]{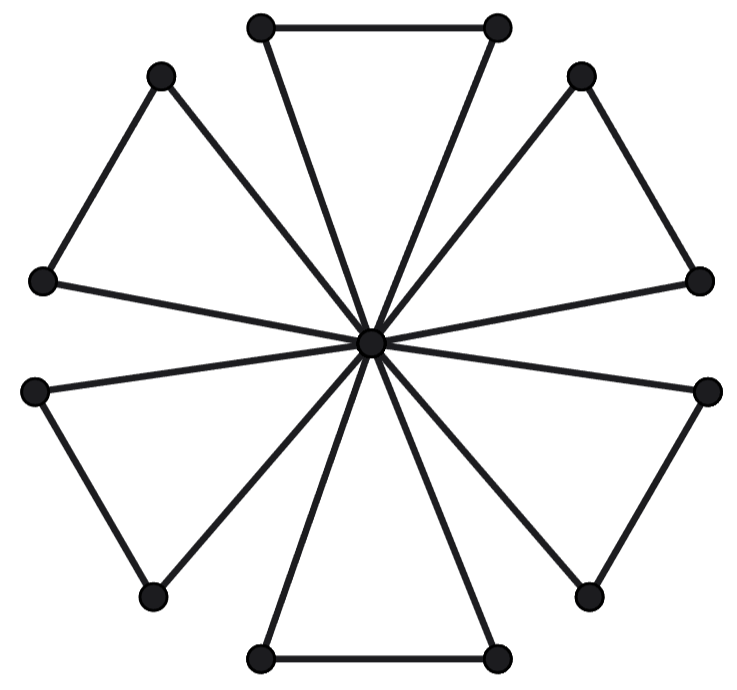}
    }
    \caption{~The graph $G_{n}$ when $n\in \{12,13\}$.}
\end{figure}

It remains to prove that $ex_{_\mathcal{OP}}^{c}(n,S_{2,2})\le h(n)$. Let $G$ be a connected $S_{2,2}$-free outerplanar graph with $n$ vertices and $ex_{_\mathcal{OP}}^{c}(n,S_{2,2})$ edges. Once $|E(G)|\le h(n)$ is proven, the proof of Theorem~\ref{jg1} can be completed. We prove $|E(G)|\le h(n)$ for all $n\ge 6$ by induction on $n$.

If $G$ is $2$-connected, then by Lemma~\ref{lemjg1}, $|E(G)|\le \lfloor\frac{5n}{4}\rfloor$. We have $\frac{3(n-1)}{2}-\frac{5n}{4}=\frac{n-6}{4}\ge 0$ as $n\ge 6$. It follows that
$h(n)- \lfloor\frac{5n}{4}\rfloor=\lfloor\frac{3(n-1)}{2}\rfloor-\lfloor\frac{5n}{4}\rfloor \ge 0$. Thus, $|E(G)|\le \lfloor\frac{5n}{4}\rfloor \le h(n)$, as desired.

Next, we assume that $G$ is not $2$-connected. Let $G_{1},G_{2},\ldots,G_{r}$ be the blocks of $G$, where $r\ge 2$. For each $i\in [r]$, let $n_{i}=|V(G_{i})|$ and $e_{i}=|E(G_{i})|$. Thus,
\begin{equation}
{\rm{\label{ke}}}
|E(G)|=\sum_{i=1}^{r} e_{i}.
\end{equation}
and by Lemma~\ref{lem3} we have
\begin{equation}
{\rm{\label{kv}}}
n=\sum_{i=1}^{r} n_{i} - r + 1 .
\end{equation}
We first present several observations that will be utilized in the subsequent proof.
\begin{ob}
{\rm{\label{fr}}}
$|E(G)|\le 2n-r-2$.
\end{ob}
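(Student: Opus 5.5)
Observation~\ref{fr} follows from a blockwise edge count. Each block $G_i$ is itself an outerplanar graph on $n_i$ vertices. So by Lemma~\ref{lem2}, and since every outerplanar graph is a subgraph of a maximal one, we have $e_i\le \max\{0,2n_i-3\}$.

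A block of a connected graph with $n\ge 6$ vertices and $r\ge 2$ blocks has at least two vertices. If $n_i=2$, then $e_i=1=2n_i-3$; if $n_i\ge 3$, then $2n_i-3>0$. Hence in every case
\[
e_i\le 2n_i-3 .
\]

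Summing over all blocks and using equations~(\ref{ke}) and~(\ref{kv}) gives
\[
|E(G)|=\sum_{i=1}^{r}e_i\le 2\sum_{i=1}^{r}n_i-3r=2(n+r-1)-3r=2n-r-2 .
\]
This is exactly the claimed bound.

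There is no real obstacle here. The only point needing care is that no block is a single vertex, so that $\max\{0,2n_i-3\}=2n_i-3$. This holds because $G$ is connected with $n\ge 2$ vertices, so every block contains an edge. The $S_{2,2}$-freeness of $G$ is not used at all for this observation.
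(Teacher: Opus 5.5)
Your proposal is correct and follows the paper's own argument: you bound each block by $e_i\le 2n_i-3$ via Lemma~\ref{lem2}, using $n_i\ge 2$, and then sum using (\ref{ke}) and (\ref{kv}) to get $2(n+r-1)-3r=2n-r-2$. Your extra justification that no block is a single vertex, so that $\max\{0,2n_i-3\}=2n_i-3$, is the same fact the paper states as $2\le n_i\le n-1$.
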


Notice that $2\le n_{i}\le n-1$ for each $i\in [r]$. By Lemma~\ref{lem2}, $e_{i}\le 2n_{i}-3$. By equalities (\ref{ke}) and (\ref{kv}), $|E(G)|=\sum_{i=1}^{r} e_{i}\le \sum_{i=1}^{r}(2n_{i}-3)=2\sum_{i=1}^{r} n_{i}-3r=2(n+r-1)-3r=2n-r-2$.

\begin{ob}
{\rm{\label{f45}}}
For some $\ell\in [r]$, if $n_{\ell}=5$, or $n_{\ell}=4$ and $G_{\ell}$ is not adjacent to $M_{2}$, then $e_{\ell}\le 2n_{\ell}-4$.
\end{ob}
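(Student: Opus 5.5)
Observation~\ref{f45} says the block $G_\ell$ is not a maximal outerplanar graph. Since $G_\ell$ is outerplanar, Lemma~\ref{lem2} gives $e_\ell\le 2n_\ell-3$, with equality exactly when $G_\ell$ is maximal. So it suffices to rule out $G_\ell\cong M_5$ when $n_\ell=5$, and $G_\ell\cong M_4$ when $n_\ell=4$ under the extra hypothesis. We are in the case where $G$ is connected, $S_{2,2}$-free, has $n\ge 6$ vertices and is not $2$-connected.

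\textbf{Case $n_\ell=5$.} If $e_\ell=7$, then $G_\ell\cong M_5$, because $\mathcal M_5$ has a single member. Hence $G$ contains $M_5$. But $G$ is connected with $|V(G)|\ge 6$, so Lemma~\ref{m5} says $G$ is $M_5$-free. This is a contradiction, and therefore $e_\ell\le 6=2n_\ell-4$.

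\textbf{Case $n_\ell=4$.} Suppose $e_\ell=5$, so $G_\ell\cong M_4$. Because $r\ge 2$ and $G$ is connected, $G_\ell$ contains a cut vertex of $G$. Hence $G_\ell$ shares a vertex with some other block $B$, and $B$ is adjacent to $G_\ell$. Lemma~\ref{m4}(1) applies to $B$ since it is adjacent to a block isomorphic to $M_4$, and it gives $B\cong M_2$. So $G_\ell$ is adjacent to an $M_2$, contradicting the hypothesis. Therefore $e_\ell\le 4=2n_\ell-4$.

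There is no real obstacle here. The only points to check are that $G_\ell$ has an adjacent block, which follows from $r\ge 2$ and connectivity, and that the hypotheses of Lemmas~\ref{m5} and~\ref{m4} (connected, $|V(G)|\ge 6$) hold. Both are immediate in the current setting.
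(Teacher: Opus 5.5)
Your proof is correct and follows the paper's argument. Lemma~\ref{lem2} bounds $e_\ell$ by $2n_\ell-3$, with equality only when $G_\ell$ is maximal outerplanar; Lemma~\ref{m5} rules out $G_\ell\cong M_5$, and Lemma~\ref{m4} rules out $G_\ell\cong M_4$ under the non-adjacency hypothesis. You also make explicit a step the paper leaves implicit, namely that $G_\ell$ has an adjacent block because $r\ge 2$ and $G$ is connected.
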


By Lemma~\ref{m5} and Lemma~\ref{m4}, $G_{\ell}\not\cong M_{n_{\ell}}$. Thus, $e_{\ell}< |E(M_{n_{\ell}})|=2n_{\ell}-3$ by Lemma~\ref{lem2}.

\begin{ob}
{\rm{\label{f6}}}
For some $\ell\in [r]$, if $n_{\ell}\ge 6$, then $e_{\ell}\le \lfloor\frac{5n_{\ell}}{4}\rfloor$.
\end{ob}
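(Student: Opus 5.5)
The observation asks us to bound a single block, so the plan is to reduce it directly to Lemma~\ref{lemjg1}. Fix $\ell\in[r]$ with $n_\ell\ge 6$ and regard the block $G_\ell$ as a graph in its own right. We check the three hypotheses of Lemma~\ref{lemjg1} for $G_\ell$.

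First, $G_\ell$ is outerplanar, since it is a subgraph of the outerplanar graph $G$ and outerplanarity is closed under taking subgraphs: delete vertices and edges from an outerplane embedding of $G$ and every remaining vertex still lies on the outer face. Second, $G_\ell$ is $S_{2,2}$-free, because any copy of $S_{2,2}$ in $G_\ell$ would also be a copy in $G$. Third, $G_\ell$ has $n_\ell\ge 6$ vertices.

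It remains to see that $G_\ell$ is $2$-connected. A block is a maximal nonseparable subgraph. A nonseparable graph on at least $3$ vertices is connected and has no cut vertex, that is, it is $2$-connected. Since $n_\ell\ge 6\ge 3$, the block $G_\ell$ is $2$-connected. This is the only point needing a word of care, because blocks with $n_\ell\le 2$ (a bridge $M_2$ or an isolated vertex) are not $2$-connected. The hypothesis $n_\ell\ge 6$ rules these out.

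Applying Lemma~\ref{lemjg1} to $G_\ell$ now gives $e_\ell=|E(G_\ell)|\le\lfloor\frac{5n_\ell}{4}\rfloor$, which is the claim. There is no real obstacle here; the observation is an immediate specialization of Lemma~\ref{lemjg1} to a single block.
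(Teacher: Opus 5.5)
Your proposal is correct and matches the paper's proof: the paper simply notes that $G_\ell$ is a $2$-connected $S_{2,2}$-free outerplanar graph on $n_\ell\ge 6$ vertices and invokes Lemma~\ref{lemjg1}. Your extra remarks, that subgraphs of outerplanar graphs are outerplanar and that a block on at least $3$ vertices is $2$-connected, make explicit what the paper leaves implicit.
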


The block $G_{\ell}$ is a $2$-connected $S_{2,2}$-free outerplanar graph with $n_{\ell}$ vertices. By Lemma~\ref{lemjg1}, $e_{\ell}\le \lfloor\frac{5n_{\ell}}{4}\rfloor$.

We proceed with the proof by distinguishing cases according to $n$.

\noindent {\bf Case~1}.~$n\in \{6,7\}$.

For $n\in \{6,7\}$, we have $h(n)=\lfloor\frac{3(n-1)}{2}\rfloor=2n-5$. If $r\ge 3$, then $|E(G)|\le 2n-r-2\le 2n-5=h(n)$ by Observation~\ref{fr}, as desired. Next, we assume that $r=2$ and $n_{1}\le n_{2}$, without loss of generality. So $2\le n_{1}\le \lfloor\frac{n}{2}\rfloor=3$. By equality (\ref{kv}), $n_{2}=n-n_{1}+1$. It follows that $4\le n_{2}\le 6$. Notice that if $n_{2}=4$, then $n_{1}=n-n_{2}+1\neq 2$. If $4\le n_{2}\le 5$, then $e_{2}\le 2n_{2}-4$ by Observation~\ref{f45}. Thus, by equality (\ref{ke}), $|E(G)|=\sum_{i=1}^{r} e_{i}\le (2n_{1}-3)+(2n_{2}-4)=2(n+1)-7=2n-5=h(n)$, as desired. If $n_{2}=6$, then by equality (\ref{kv}) and $r=2$ we have $n_{1}=2$ and $n=7$. By Observation~\ref{f6}, $e_{2}\le \lfloor\frac{5n_{2}}{4}\rfloor$ and so $|E(G)|=\sum_{i=1}^{r} e_{i}\le (2n_{1}-3)+\lfloor\frac{5n_{2}}{4}\rfloor= 1+7=8<9=h(7)$, as desired.
\bigskip

\noindent {\bf Case~2}.~$n\ge 8$.

We begin by stating two claims that will be used in the proof of Case~2.

\begin{cla}
{\rm{\label{cla1}}}
For all $n\ge 8$, if there exists a vertex of degree 1 or two adjacent vertices of degree 2 in $G$, then $|E(G)|\le h(n)$.
\end{cla}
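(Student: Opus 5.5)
We argue by induction on $n$, using the induction hypothesis that $|E(G')|\le h(n')$ for every connected $S_{2,2}$-free outerplanar graph $G'$ on $n'$ vertices with $6\le n'<n$; this is available because Case~1 already covers $n\in\{6,7\}$. The idea is to delete the low-degree vertices, check that the resulting graph is still connected (it is automatically $S_{2,2}$-free and outerplanar, being a subgraph), and then compare edge counts.

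\textbf{First situation: a vertex $v$ with $d_G(v)=1$.} Set $G'=G\setminus\{v\}$. This graph is connected, has $n-1\ge 7$ vertices, and has $|E(G)|-1$ edges. By induction, $|E(G)|\le h(n-1)+1$. From equation~(\ref{zhi}), $h(n)-h(n-1)$ equals $1$ when $n$ is even and $2$ when $n$ is odd. So in either parity $h(n-1)+1\le h(n)$, and we are done.

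\textbf{Second situation: adjacent vertices $u,w$ with $d_G(u)=d_G(w)=2$.} Let $N_G(u)=\{w,a\}$ and $N_G(w)=\{u,b\}$.
\begin{itemize}
\item If $a=b$, then $uwa$ is a triangle hanging at $a$. Deleting $\{u,w\}$ removes exactly $3$ edges and leaves a connected graph on $n-2\ge 6$ vertices.
\item If $a\ne b$, deleting $\{u,w\}$ removes exactly $3$ edges ($ua$, $uw$, $wb$). The result $G''$ might be disconnected. If it is connected, we apply induction to it. If it is not, then $uw$ lies on every $a$--$b$ path, so $uw$ is a bridge. In that case we instead delete only the edge $uw$. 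This produces two components, each a graph in which $u$ or $w$ is a pendant vertex. Rather than handling those, it is simpler to delete $u$ alone: $G\setminus\{u\}$ is connected because $w$ still hangs from $b$. It has $n-1$ vertices and $|E(G)|-2$ edges, and then $|E(G)|\le h(n-1)+2$, which is too weak when $n$ is even. So we delete both $u$ and $w$ and treat the two components $H_a\ni a$ and $H_b\ni b$ separately. Their orders satisfy $n_a+n_b=n-2$. We bound each by $h$ if its order is at least $6$, and by $\max\{0,2k-3\}$ (Lemma~\ref{lem12}, with the $M_5$/$M_4$ restrictions of Lemmas~\ref{m5} and~\ref{m4} for orders $4$ and $5$) if its order is small. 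We then check that the total plus $3$ is at most $h(n)$.
\end{itemize}
In the connected case, induction gives $|E(G)|\le h(n-2)+3$. Since $h(n)-h(n-2)=3$ for every $n$, this gives exactly $h(n)$.

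\textbf{Main obstacle.} The disconnected (bridge) subcase is where the real work lies. Its key inequality is superadditivity, roughly $h(n_a)+h(n_b)+3\le h(n_a+n_b+2)$, which holds because $h$ grows like $\tfrac32 n$ with a negative constant; the small-order components need a separate but finite check. I would list the possibilities $n_a,n_b\in\{1,\dots,5\}$ and use the bound $\tfrac32 k-\tfrac32$ as a crude upper estimate valid for all orders, except that order $3$ allows $3$ edges (a triangle), exceeding $\tfrac32\cdot 3-\tfrac32=3$ only by $0$. This suggests the uniform bound $|E(H)|\le \tfrac{3(k-1)}{2}$ holds for all components, which would settle the subcase directly: $\tfrac{3(n_a-1)}{2}+\tfrac{3(n_b-1)}{2}+3=\tfrac{3(n-2)}{2}$, and this must then be compared with $h(n)$ using integrality.
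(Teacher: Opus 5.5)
Your leaf case and your connected degree-$2$ case are exactly the paper's argument: there you use $h(n)-h(n-1)\ge 1$ and $h(n)-h(n-2)=3$. You are also right that $G\setminus\{u,w\}$ can be disconnected. The paper's proof passes over this when it asserts that $G^{*}$ is connected, and the assertion really can fail. For instance, join a degree-$2$ vertex $a$ of one copy of $M_4$ to a degree-$2$ vertex $b$ of a second copy by the path $a\,u\,w\,b$. This is a connected $S_{2,2}$-free outerplanar graph on $10$ vertices with $5+5+3=13=h(10)$ edges, so it can even be extremal. Here $u,w$ are adjacent vertices of degree $2$, and $G\setminus\{u,w\}\cong 2M_4$. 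So your bridge subcase is genuinely needed, and in that respect your proposal is more careful than the paper.

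Your proposed way of closing that subcase, however, rests on a false bound. The estimate $|E(H)|\le \frac{3(k-1)}{2}$ fails for a component $H\cong M_4$, since $5>\frac{9}{2}$. Lemma~\ref{m4} does not rule such a component out: the only block it meets is the bridge $au\cong M_2$, which is exactly the configuration that lemma permits (and Observation~\ref{f45} gives nothing better there). In the example above, your estimate would give $|E(G)|\le \frac{3(n-2)}{2}=12$, yet $|E(G)|=13$.

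The correct uniform bound for a component of order $k$ is $|E(H)|\le \frac{3(k-1)}{2}+\frac{1}{2}$. This follows by induction for $k\ge 6$, by Lemma~\ref{m5} applied to $G$ for $k=5$ (giving at most $6$ edges), and directly for $k\le 4$. It yields $|E(G)|\le \frac{3(n-4)}{2}+1+3=\frac{3n}{2}-2\le h(n)$. This is attained by the two-$M_4$ example, so the extra $\frac12$ per component uses up all the available slack. With that correction your argument is complete.
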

\textit{Proof}.~Let $w\in V(G)$ such that $d_{G}(w)=1$, and let $G'=G\setminus\{w\}$. We have $|E(G')|=|E(G)|-1~and~|V(G')|=n-1$. Since $|V(G')|=n-1\ge 7$ and $G'$ is a connected $S_{2,2}$-free outerplanar graph, by the induction hypothesis,
\begin{equation*}
|E(G')| \le h(|V(G')|)=h(n-1).
\end{equation*}
By equality (\ref{zhi}),
\begin{equation*}
h(n)-h(n-1)={\left\{\begin{array}{ll}
   \big(\frac{3n}{2}-2 \big)-\frac{3((n-1)-1)}{2}=1,& \text {if}~n~\text{is~even}, \\
  \frac{3(n-1)}{2}- \big(\frac{3(n-1)}{2}-2 \big)=2,& \text {if}~n~\text{is~odd}.\\
\end{array}\right.}
\end{equation*}
Combining this with $|E(G)|=|E(G')|+1\le h(n-1)+1$, we get
\begin{equation*}
h(n)-|E(G)|\ge h(n)-(h(n-1)+1)\ge 0.
\end{equation*}
It follows that $|E(G)|\le h(n)$, as desired.

Let $\{u,v\}\subseteq V(G)$ such that $d_{G}(u)=d_{G}(v)=2$, $uv\in E(G)$, and let $G^{*}=G\setminus\{u,v\}$. We have $|E(G^{*})|=|E(G)|-3$~and~$|V(G^{*})|=n-2$. Since $|V(G^{*})|=n-2\ge 6$ and $G^{*}$ is a connected $S_{2,2}$-free outerplanar graph, by the induction hypothesis,
\begin{equation*}
|E(G^{*})| \le h(|V(G^{*})|)=h(n-2).
\end{equation*}
By equality (\ref{zhi}),
\begin{equation*}
{\rm{\label{shan2}}}
h(n)-h(n-2)={\left\{\begin{array}{ll}
  \big(\frac{3n}{2}-2 \big)- \big(\frac{3(n-2)}{2}-2 \big)=3,& \text {if}~n~\text{is~even}, \\
  \frac{3(n-1)}{2}-\frac{3((n-2)-1)}{2}=3,& \text {if}~n~\text{is~odd}.\\
\end{array}\right.}
\end{equation*}
Combining this with $|E(G)|=|E(G^{*})|+3\le h(n-2)+3$, we get
\begin{equation*}
h(n)-|E(G)|\ge h(n)-(h(n-2)+3)\ge 0.
\end{equation*}
It follows that $|E(G)|\le h(n)$, as desired.

This completes the proof of Claim~\ref{cla1}.
$\hfill \square$

\begin{cla}
{\rm{\label{cladk}}}
For all $n\ge 8$, if there exists an endblock of order at most 5 in $G$, then $|E(G)|\le h(n)$.
\end{cla}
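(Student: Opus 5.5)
The plan is to delete the endblock minus its cut vertex and apply the induction hypothesis to what remains. Let $B$ be an endblock of $G$ with $k=|V(B)|\le 5$, and let $c$ be its unique cut vertex. Put $G'=G\setminus (V(B)\setminus\{c\})$. Then $G'$ is a connected $S_{2,2}$-free outerplanar graph with $n-k+1$ vertices and $|E(G)|-|E(B)|$ edges. We split into cases according to $k$.

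\emph{Case $k=2$.} Here $B\cong M_2$, and its vertex other than $c$ has degree $1$ in $G$. Claim~\ref{cla1} applies directly and gives $|E(G)|\le h(n)$.

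\emph{Case $k=3$.} Now $B\cong M_3$ is a triangle. Its two vertices other than $c$ have degree $2$ in $G$ and are adjacent to each other. Claim~\ref{cla1} again gives $|E(G)|\le h(n)$.

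\emph{Cases $k\in\{4,5\}$.} We delete $k-1\in\{3,4\}$ vertices, so $|V(G')|=n-k+1$. For $k=4$ this is $n-3\ge 5$, and for $k=5$ it is $n-4\ge 4$. For the edge count of $B$ we use Lemma~\ref{m5} and Lemma~\ref{m4}.

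If $k=5$, Observation~\ref{f45} gives $e(B)\le 6$.

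If $k=4$, $B$ is either $M_4$ (with $5$ edges) or has at most $4$ edges. If $B\cong M_4$, Lemma~\ref{m4} forces the block adjacent to $B$ at $c$ to be $M_2$. In fact Claim~\ref{cla0} says $c$ has at most one neighbour outside $B$, and Claim~\ref{cla00} says that neighbour $c'$ has degree at most $2$. So $c'$ is either a leaf or a degree-$2$ vertex whose other neighbour lies in a further bridge. If $c'$ is a leaf, Claim~\ref{cla1} finishes. Otherwise we delete $V(B)\cup\{c'\}$, removing $5$ vertices and $5+1+1=7$ edges, which leaves a connected graph.

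For the induction step we need
\[h(n)-h(n-3)\ge 4 \quad\text{or 5},\qquad h(n)-h(n-4)=6,\qquad h(n)-h(n-5)\ge 7.\]
From (\ref{zhi}), $h(n)-h(n-3)\in\{4,5\}$, $h(n)-h(n-4)=6$, and $h(n)-h(n-5)\in\{7,8\}$.

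With these values the cases close as follows. For $k=4$ with $B\not\cong M_4$, we have $4\le h(n)-h(n-3)$. For $k=5$, we have $6\le h(n)-h(n-4)$. For $B\cong M_4$ with the bridge removed, we have $7\le h(n)-h(n-5)$.

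The main obstacle is that the remaining graph $G'$ may have fewer than $6$ vertices, where the induction hypothesis $h$ does not apply. This happens for $n\in\{8,9\}$ and for $n=10$ in the $M_4$-plus-bridge case. For those small orders I would bound $|E(G')|$ directly instead. One option is Lemma~\ref{lem2}, $|E(G')|\le 2|V(G')|-3$, sharpened by Lemma~\ref{m5} and Lemma~\ref{m4}, which exclude $M_5$, and $M_4$ unless it is attached to a bridge. The other option is the structural restrictions already present in $G$: the cut vertex $c$ and $B$ live inside $G$, which is $S_{2,2}$-free on $n\ge 6$ vertices. Alternatively, I would apply Observation~\ref{fr} to $G$ when $r$ is large.

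I expect these small cases, with $n-k+1\le 5$, to need an explicit count. For example, when $n=8$ and $k=5$, $G'$ has $4$ vertices. If $G'\cong M_4$ it attaches to $B$ through a non-bridge, which Lemma~\ref{m4} forbids. So $|E(G')|\le 4$ and $|E(G)|\le 10=h(8)$.
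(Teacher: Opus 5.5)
Your proof is correct. For $k\in\{2,3\}$, and for $B\cong M_4$, it is essentially the paper's argument. In the $M_4$ case the paper also strips $M_4$ together with the two bridges (five vertices, seven edges), and sends a pendant bridge to Claim~\ref{cla1}.

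\textbf{Where the routes differ.} The difference is in the cases $k=5$ and $B\cong C_4$. The paper stays inside Claim~\ref{cla1} by exhibiting two adjacent degree-$2$ vertices of $G$ in the endblock. It therefore only ever deletes one or two vertices, and never meets a remainder of order below $6$. You instead delete the whole endblock and use $e(B)\le 6\le h(n)-h(n-4)$ and $e(B)=4\le h(n)-h(n-3)$.

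\textbf{Cost of your route: the small orders.} These do close with exactly the tools you name.
\begin{itemize}
\item For $k=5$ with $n=9$, and for $C_4$ with $n=8$, the remainder has $5$ vertices. It has at most $6$ edges because $G$ is $M_5$-free by Lemma~\ref{m5}, which gives totals $12=h(9)$ and $10=h(8)$.
\item For the $M_4$ deletion with $n=8,9,10$, the remainder has $3,4,5$ vertices and at most $3,5,6$ edges. This gives $10,12,13=h(n)$.
\item Your worked case $n=8$, $k=5$ is right.
\end{itemize}
Observation~\ref{fr} is not needed anywhere. The paper avoids the remainder of order $3$ in the $M_4$ case by first assuming all endblocks have order at least $4$; you handle it by direct count instead, which is equally fine.

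\textbf{Gain from your route.} Your edge-count argument is more robust than the paper's for $k=5$. If $B$ is $C_5$ plus one chord, its only pair of adjacent degree-$2$ vertices may contain the cut vertex. Excluding this needs an extra appeal to Lemma~\ref{lem11}, which the paper does not spell out: the chord endpoint next to the cut vertex would span an $S_{2,2}$ with it. You need only $e(B)\le 6$ from Observation~\ref{f45}, so this issue never arises.
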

\textit{Proof}.~For some $\ell\in [r]$, let $G_{\ell}$ be an endblock of $G$ and $2\le n_{\ell}\le 5$. If $n_{\ell}\in \{2,3\}$, then obviously $G_{\ell}\cong M_{n_{\ell}}$. Notice that there exists a vertex of degree 1 in $G_{\ell}$ when $n_{\ell}=2$ and there exist two adjacent vertices of degree 2 in $G_{\ell}$ when $n_{\ell}=3$, by Claim~\ref{cla1}, we have $|E(G)|\le h(n)$. Next, we assume that all endblocks of $G$ have an order of at least 4.

If $n_{\ell}=5$, then by Lemma~\ref{m5} we have $G_{\ell}\not\cong M_{5}$. By Lemma~\ref{lem1}, $G_{\ell}$ has a Hamilton cycle. So $G_{\ell}$ contains two adjacent vertices of degree 2 and so Claim~\ref{cladk} holds by Claim~\ref{cla1}.

Next, we assume that $n_{\ell}=4$. Without loss of generality, let $\ell=1$ and $G_{i+1}$ be adjacent to $G_{i}$ for all $i\in [2]$. Since $G_{1}$ is a block of order 4, $G_{1}\cong C_{4}$ or $G_{1}\cong M_{4}$. If $G_{1}\cong C_{4}$, then there exist two adjacent vertices of degree 2 in $G_{1}$ and so $|E(G)|\le h(n)$ by Claim~\ref{cla1}. We next assume that $G_{1}\cong M_{4}$. By Lemma~\ref{lem2}, $e_{1}=2n_{1}-3=5$. By Lemma~\ref{m4}, $G_{2}\cong M_{2}$ and $G_{3}\cong M_{2}$. So $n_{2}=n_{3}=2$ and $e_{2}=e_{3}=1$. Let $\overline{G}=G\big[\bigcup_{i=1}^{3} E(G_{i})\big]$ and $\widetilde{G}=G[E(G)\setminus E(\overline{G})]$. To clearly identify the names of each vertex, we have depicted the structure of graph $G$ in Figure~\ref{figurem4jg}, where the part enclosed by a dashed circle is $\widetilde{G}$ and the remaining part is $\overline{G}$. Denote $|V(\widetilde{G})|$~($|V(\overline{G})|$) and $|E(\widetilde{G})|$~($|E(\overline{G})|$) by $\tilde{n} $~($\bar{n}$) and $\tilde{e}$~($\bar{e}$), respectively. By the construction of $\overline{G}$, $\bar{n}=6$ and $\bar{e}=\sum_{i=1}^{3}e_{i}=5+1+1=7$. Thus,
\begin{equation*}
{\rm{\label{m4shi}}}
\tilde{n}=n-\bar{n}+1=n-5~\textup{and}~|E(G)|=\tilde{e}+\bar{e}=\tilde{e}+7.
\end{equation*}
Notice that $G$ has at least two endblocks as $G$ is outerplanar. We have $\overline{G}$ must contain an endblock. Combining this with all endblocks of $G$ have an order of at least 4, $\tilde{n}\ge 4$. We proceed the prove by considering cases based on the value of $\tilde{n}$.
\begin{figure}[htp]
	\centering
	\includegraphics[width=9.5cm]{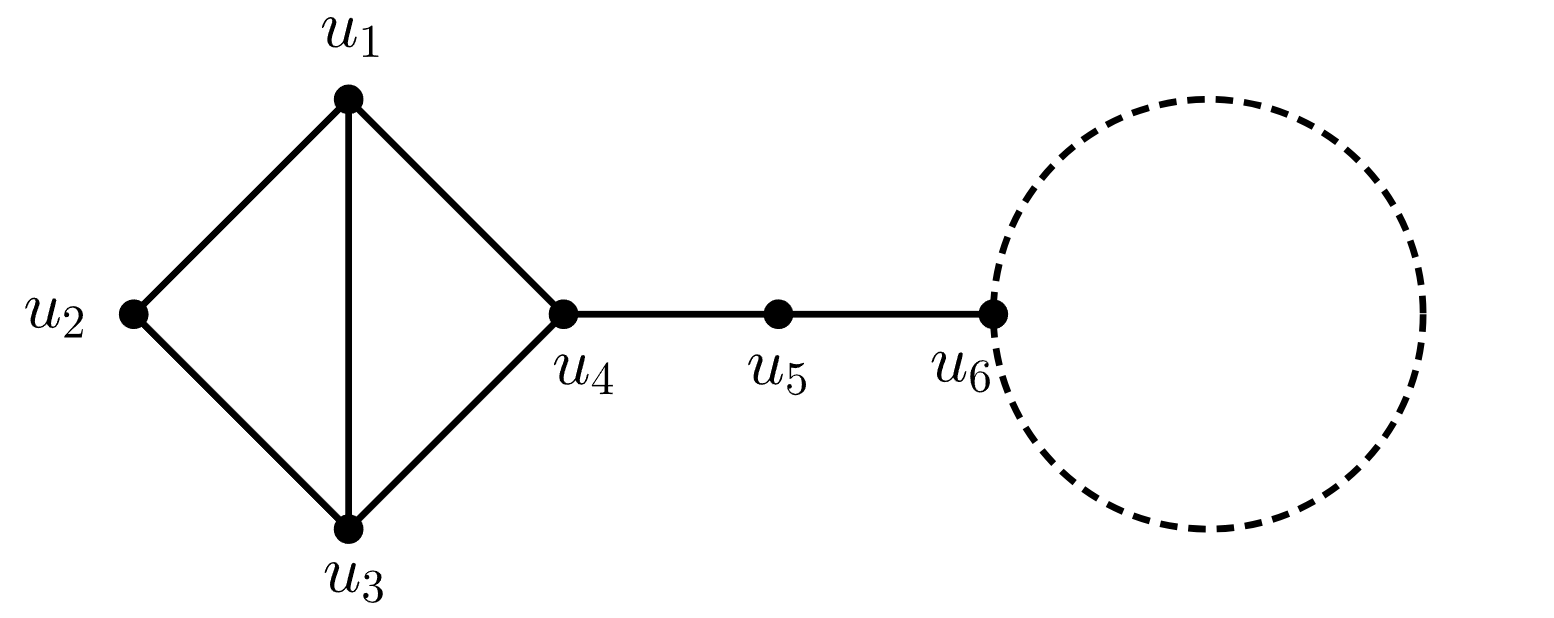}
	\caption{ The graph $G$ when $n_{1}=4$.}
	\label{figurem4jg}
\end{figure}

When~$\tilde{n}\in \{4,5\}$, $h(n)=h(\tilde{n}+5)=\lfloor\frac{3(\tilde{n}+5-1)}{2}\rfloor=\tilde{n}+8$ by $n=\tilde{n}+5$. If $\tilde{n}=4$, then by Lemma~\ref{lem2} we have $\tilde{e}\le 2\tilde{n}-3$ and so $|E(G)|=\tilde{e}+7\le 2\tilde{n}+4=\tilde{n}+8=h(n)$, as desired. If $\tilde{n}=5$, then by Observation~\ref{f45} we have $\tilde{e}\le 2\tilde{n}-4$ and so $|E(G)|=\tilde{e}+7\le 2\tilde{n}+3=\tilde{n}+8=h(n)$, as desired.

Next, we assume that~$\tilde{n}\ge 6$. Since $6\le \tilde{n}< n$ and $\widetilde{G}$ is a connected $S_{2,2}$-free outerplanar graph, by the induction hypothesis, $\tilde{e} \le h(\tilde{n})$. So $|E(G)|=\tilde{e}+7\le h(\tilde{n})+7=\lfloor\frac{3(\tilde{n}-1)}{2}\rfloor+7\le \frac{3(\tilde{n}-1)}{2}+7$. By equality (\ref{zhi}), $\frac{3n}{2}-2\le h(n)\le \frac{3(n-1)}{2}$. Thus, $h(n)-|E(G)|\ge \big(\frac{3n}{2}-2\big)-\big(\frac{3(\tilde{n}-1)}{2}+7\big)=\frac{3(n-\tilde{n})-15}{2}=\frac{3\times 5-15}{2}=0$ as $n=\tilde{n}+5$. It follows that $|E(G)|\le h(n)$, as desired.

This completes the proof of Claim~\ref{cladk}.
$\hfill \square$
\bigskip

If there exists an endblock of order at most 5 in $G$, then by Claim~\ref{cladk} we have $|E(G)|\le h(n)$, as desired. Next, we assume that every endblock in $G$ is of order at least 6. We may assume that $G_{1}$ is an endblock of $G$ and $H=G[E(G)\setminus E(G_{1})]$. For notational convenience, we denote $|E(H)|$ and $|V(H)|$ by $e_{0}$ and $n_{0}$, respectively. By the construction of $H$, we get $H$ has $r-1$ blocks, $|V(G)|=n_{0}+n_{1}-1$ and $|E(G)|=e_{0}+e_{1}$. Since $G$ is a non-2-connected outerplanar graph, $G$ has at least two endblocks. Notice that every endblock in $G$ is of order at least 6, so $n_{0}\ge 6$. Combining this with the fact that $H$ is a connected $S_{2,2}$-free outerplanar graph with $n_{0}$ vertices, by the induction hypothesis, $e_{0}\le h(n_{0})$. Thus, $|E(G)|=e_{0}+e_{1}\le h(n_{0})+e_{1}$. Since $n_{1}\ge 6$, $e_{1}\le \lfloor\frac{5n_{1}}{4}\rfloor\le \frac{5n_{1}}{4}$ by Lemma~\ref{lemjg1}. Hence, by equality (\ref{zhi}) and $n=n_{0}+n_{1}-1$,
\begin{equation*}
\begin{aligned}
h(n)-|E(G)|
& \ge h(n)-(h(n_{0})+e_{1})\\
& \ge \bigg(\frac{3n}{2}-2\bigg)-\bigg(\frac{3(n_{0}-1)}{2}+\frac{5n_{1}}{4}\bigg)\\
& =\frac{3(n_{1}-1)-1}{2}-\frac{5n_{1}}{4}=\frac{n_{1}-8}{4}.
\end{aligned}
\end{equation*}
If $n_{1}\ge 8$, then $h(n)-|E(G)|\ge \frac{n_{1}-8}{4}\ge 0$, as desired. It only remains to prove that $|E(G)|\le h(n)$ holds when $n_{1}\in\{6,7\}$. By Observation~\ref{f6}, $e_{1}\le \lfloor\frac{5n_{1}}{4}\rfloor=n_{1}+1$. Since $G_{1}$ is a block of order $n_{1}$, $G_{1}$ is 2-connected. By Lemma~\ref{lem1}, $G_{1}$ has a Hamilton cycle which contains $n_{1}$ edges. Thus, the Hamilton cycle of $G_{1}$ has at most one chord. Therefore, $G_{1}$ must contain two adjacent vertices of degree 2 and so Theorem~\ref{jg1} holds by Claim~\ref{cla1}.

In conclusion, we complete the proof of Theorem~\ref{jg1}.
$\hfill \square$

\section{Proof of Theorem~\ref{jg2}}

Let $h(n)=\lfloor\frac{3(n-1)}{2}\rfloor$ and $$h_{1}(n)={\left\{\begin{array}{ll}
  h(n),& if~n\ge 6~and~n\neq 10; \\
  14,& if~n=10.\\
\end{array}\right.}$$
We only need to prove that $ex_{_\mathcal{OP}}(n,S_{2,2})=h_{1}(n)$ for all $n\ge 6$. We first prove that $ex_{_\mathcal{OP}}(n,S_{2,2})\ge h_{1}(n)$. When $n=10$, let $G_{0}=2M_{5}$. By Lemma~\ref{lem2}, $|E(G_{0})|= 2(2\times 5-3)=14$. Obviously, $G_{0}$ is an outerplanar graph with 10 vertices and $G_{0}$ is $S_{2,2}$-free as $|V(M_{5})|=5<6=|V(S_{2,2})|$. Hence, $ex_{_\mathcal{OP}}(n,S_{2,2})\ge |E(G_{0})|=14$, as desired. When $n\ge 6$ and $n\neq 10$, we have $ex_{_\mathcal{OP}}^{c}(n,S_{2,2})=h(n)$ as Theorem~\ref{jg1} holds. So from the definitions of $ex_{_\mathcal{OP}}(n,S_{2,2})$ and $ex_{_\mathcal{OP}}^{c}(n,S_{2,2})$, $ex_{_\mathcal{OP}}(n,S_{2,2})\ge ex_{_\mathcal{OP}}^{c}(n,S_{2,2})=h(n)$, as desired. Thus, $ex_{_\mathcal{OP}}(n,S_{2,2})\ge h_{1}(n)$ for all $n\ge 6$.

Subsequently, we shall prove $ex_{_\mathcal{OP}}(n,S_{2,2})\le h_{1}(n)$ for all $n\ge 6$. Let $G$ be an $S_{2,2}$-free outerplanar graph with $n$ vertices and $ex_{_\mathcal{OP}}(n,S_{2,2})$ edges. We are going to show $|E(G)|\le h_{1}(n)$. Notice that $\lfloor\frac{3(n-1)}{2}\rfloor=13< 14$ when $n=10$. If $G$ is connected, then by Theorem~\ref{jg1} we have $|E(G)|\le h(n)\le h_{1}(n)$ for all $n\ge 6$, as desired. So we next assume that $G$ is disconnected. Let $H_{1},H_{2},\ldots,H_{s}$ be the components of $G$, where $s\ge 2$. For each $i\in [s]$, let $n_{i}=|V(H_{i})|$ and $e_{i}=|E(H_{i})|$. So
\begin{equation}
{\rm{\label{ev}}}
|E(G)|=\sum_{i=1}^{s}e_{i}~\textup{and}~n=\sum_{i=1}^{s}n_{i}.
\end{equation}
For each $j \in [5]$, let $s_{j}$ denote the number of components of order $j$, and $s_{6}$ denote the number of components with order at least 6. So
\begin{equation}
{\rm{\label{ks}}}
s=\sum_{i=1}^{6}s_{i}.
\end{equation}
Without loss of generality, assume $H_{1},H_{2},\ldots,H_{s_{6}}$ are the components with order at least 6. By equalities (\ref{ev}) and (\ref{ks}),
\begin{equation}
{\rm{\label{dv}}}
n=\sum_{i=1}^{s}n_{i}=\sum_{i=1}^{s_{6}}n_{i}+\sum_{j=1}^{5}js_{j}.
\end{equation}

We complete the proof by dividing it into two cases based on the value of $n$.

\noindent {\bf Case~1}.~$n= 10$.

We are going to prove that $|E(G)|\le 14$. By equality (\ref{dv}) and $n_{i}\ge 6$ for each $i\in [s_{6}]$, $n\ge \sum_{i=1}^{s_{6}}n_{i}\ge 6s_{6}$. So $s_{6}\le 1$, otherwise contradicting $n= 10$. If $s_{6}= 1$, then $n_{1}\ge 6$ and by Theorem~\ref{jg1} we have $e_{1}\le h(n_{1})$. Notice that $1\le n_{i}\le 5$ for each $i\in \{2,3,\cdots,s\}$. So by equality (\ref{ev}) and Lemma~\ref{lem2}, $|E(G)|=\sum_{i=1}^{s}e_{i}=e_{1}+\sum_{i=2}^{s}e_{i}\le h(n_{1})+\sum_{i=2}^{s}(2n_{i}-3)=\lfloor\frac{3(n_{1}-1)}{2}\rfloor+(2(10-n_{1})-3)\le \frac{3(n_{1}-1)}{2}+(2(10-n_{1})-3)=14+\frac{3-n_{1}}{2}<14$, as desired. If $s_{6}= 0$, then $1\le n_{i}\le 5$ for each $i \in [s]$ and by Lemma~\ref{lem2} we have $e_{i}\le 2n_{i}-3$. Thus, by equality (\ref{ev}) and $s\ge 2$, $|E(G)|=\sum_{i=1}^{s}e_{i}\le \sum_{i=1}^{s}(2n_{i}-3)=2n-3s\le 2n-3\times 2=14$, as desired.
\bigskip

\noindent {\bf Case~2}.~$n\ge 6$ and $n\neq 10$.

We are going to prove that $|E(G)|\le h(n)$. By Theorem~\ref{jg1} and equality (\ref{dv}),
\begin{equation}
{\rm{\label{shi1}}}
\sum_{i=1}^{s_{6}}e_{i}\le \sum_{i=1}^{s_{6}}h(n_{i})= \sum_{i=1}^{s_{6}}\bigg\lfloor\frac{3(n_{i}-1)}{2}\bigg\rfloor\le \Bigg\lfloor\frac{3\sum_{i=1}^{s_{6}}n_{i}-3s_{6}}{2}\Bigg\rfloor=\Bigg\lfloor\frac{3\big(n-\sum_{j=1}^{5}js_{j}\big)-3s_{6}}{2}\Bigg\rfloor.
\end{equation}
By Lemma~\ref{lem2}, $e_{i}\le \textup{max}\{0,2n_{i}-3\}$. Combining this with equality (\ref{ks}),
\begin{equation}
{\rm{\label{shi2}}}
\begin{aligned}
\sum_{i=s_{6}+1}^{s}e_{i}\le \sum_{i=s_{6}+1}^{s}\textup{max}\{0,2n_{i}-3\}=\sum_{j=1}^{5}s_{j}\times \textup{max}\{0,2j-3\}=s_{2}+3s_{3}+5s_{4}+7s_{5}.
\end{aligned}
\end{equation}
Combining this with equality (\ref{ev}) and inequalities (\ref{shi1}), (\ref{shi2}), we obtain
\begin{equation*}
{\rm{\label{disedge}}}
\begin{aligned}
|E(G)|=\overset{s}{\underset{i=1}{\sum}}e_{i}
& = \sum_{i=1}^{s_{6}}e_{i}+\sum_{j=s_{6}+1}^{s}e_{i}\\
& \le \bigg\lfloor\frac{3(n-\sum_{j=1}^{5}js_{j})-3s_{6}}{2}\bigg\rfloor+(s_{2}+3s_{3}+5s_{4}+7s_{5})\\
& = \bigg\lfloor\frac{3(n-1)+3(1-\sum_{j=1}^{5}js_{j})-3s_{6}+2s_{2}+6s_{3}+10s_{4}+14s_{5}}{2}\bigg\rfloor\\
& = \bigg\lfloor\frac{3(n-1)}{2}+\frac{3(1-s_{6}-s_{1}-s_{2}-s_{3})-s_{2}-2s_{4}-s_{5}}{2}\bigg\rfloor.
\end{aligned}
\end{equation*}

Let $p=3(1-s_{6}-s_{1}-s_{2}-s_{3})-s_{2}-2s_{4}-s_{5}$. To prove $|E(G)|\le h(n)=\lfloor\frac{3(n-1)}{2}\rfloor$, it remains to prove that $p\le 0$. Since $s_{i}\ge 0$ for all $i\in [6]$, if $s_{6}+s_{1}+s_{2}+s_{3}\ge 1$, then $p\le 0$, as desired. Next, we suppose that $s_{6}+s_{1}+s_{2}+s_{3}=0$, this implies $s_{6}=s_{1}=s_{2}=s_{3}=0$ and $p=(3-s_{4}-s_{5})-s_{4}$. If $s_{4}+s_{5}\ge 3$, then $p\le 0$, as desired. We now consider $s_{4}+s_{5}\le 2$. By equality (\ref{ks}) and $s\ge 2$, we have $s=s_{4}+s_{5}\ge 2$. Thus, $s_{4}+s_{5}=2$ and so $p=1-s_{4}$. If $s_{4}\ge 1$, then $p\le 0$, as desired. So the only remaining case is $s_{4}=0$ and $s_{5}=2$. By equality (\ref{dv}), $n=5s_{5}=10$, contradicting $n\neq 10$. Therefore, $p\le 0$ holds in all cases.

In summary, the proof of Theorem~\ref{jg2} is completed.
$\hfill \square$

\section{Proof of Theorem~\ref{jg3}}

By Lemma~\ref{lem12}, $ex_{_\mathcal{OP}}(n,S_{p,q})\le 2n-3$. It remains to prove $ex_{_\mathcal{OP}}(n,S_{p,q})\ge 2n-3$. For $p\ge 3$ or $q\ge 4$, we only need to construct an $S_{p,q}$-free outerplanar graph with $n$ vertices and $2n-3$ edges. We consider the following two cases depending on the values of $p$ and $q$.

\noindent {\bf Case~1}.~$p= q=3$.

Let $T_{n}=K_{1}+P_{n-1}$~(see Figure $\ref{figureon}$). By the construction of $T_{n}$, $T_{n}$ is an outerplanar graph, $|V(T_{n})|=1+(n-1)=n$, and $|E(T_{n})|=(n-1)+(n-2)=2n-3$. Since there is only one vertex of degree at least 4 in $T_{n}$, and $S_{p,q}$ requires more than one vertex of degree at least 4, we conclude that $T_{n}$ is $S_{p,q}$-free. Thus, $ex_{_\mathcal{OP}}(n,S_{p,q})\ge |E(T_{n})|= 2n-3$, as desired.
\bigskip

\noindent {\bf Case~2}.~$p\ge 4$ or $q\ge 4$.

Let $O_{n}$ denote the maximal outerplanar graph with maximum degree $4$. When $n = 14$, the graph $O_{n}$ is depicted in Figure $\ref{figuremn}$. For $p\ge 4$ or $q\ge 4$, it is easy to see that $O_{n}$ is $S_{p,q}$-free because the maximum degree of $O_{n}$ is four. Thus, $ex_{_\mathcal{OP}}(n,S_{p,q})\ge |E(O_{n})|= 2n-3$, as desired.
\begin{figure}[htbp]
    \centering
    \subfigure[~The construction of $T_{n}$.]{
        \includegraphics[width=4.2cm]{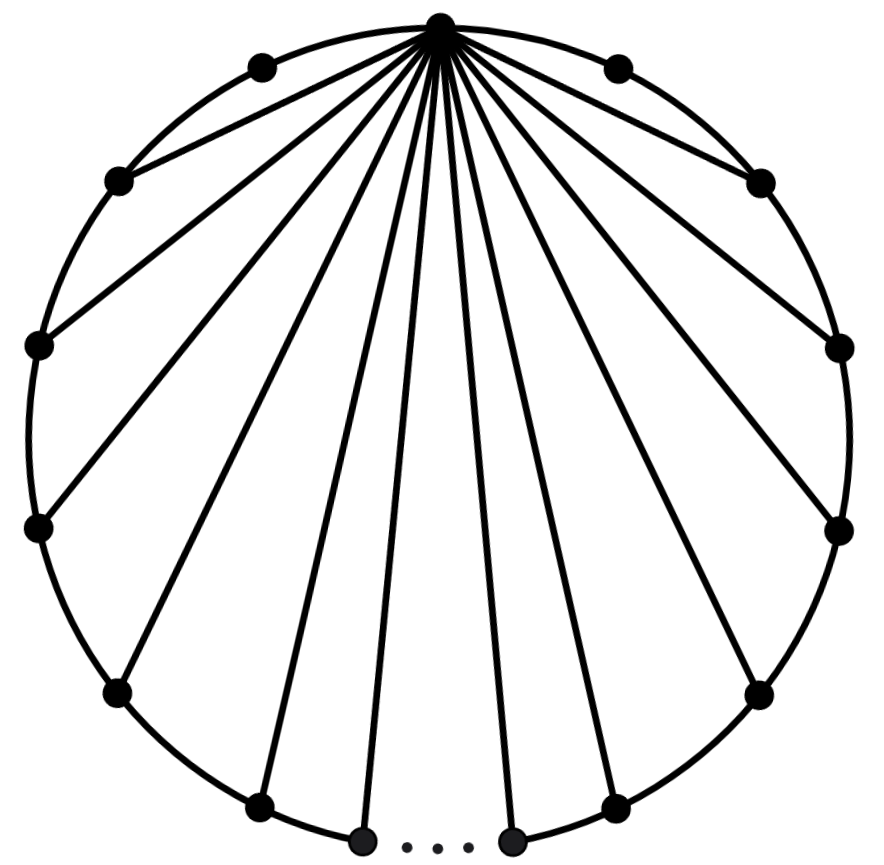}
        \label{figureon}
    }
    \hspace{0.05\textwidth} 
    \subfigure[~The construction of $O_{14}$.]{
        \includegraphics[width=4.2cm]{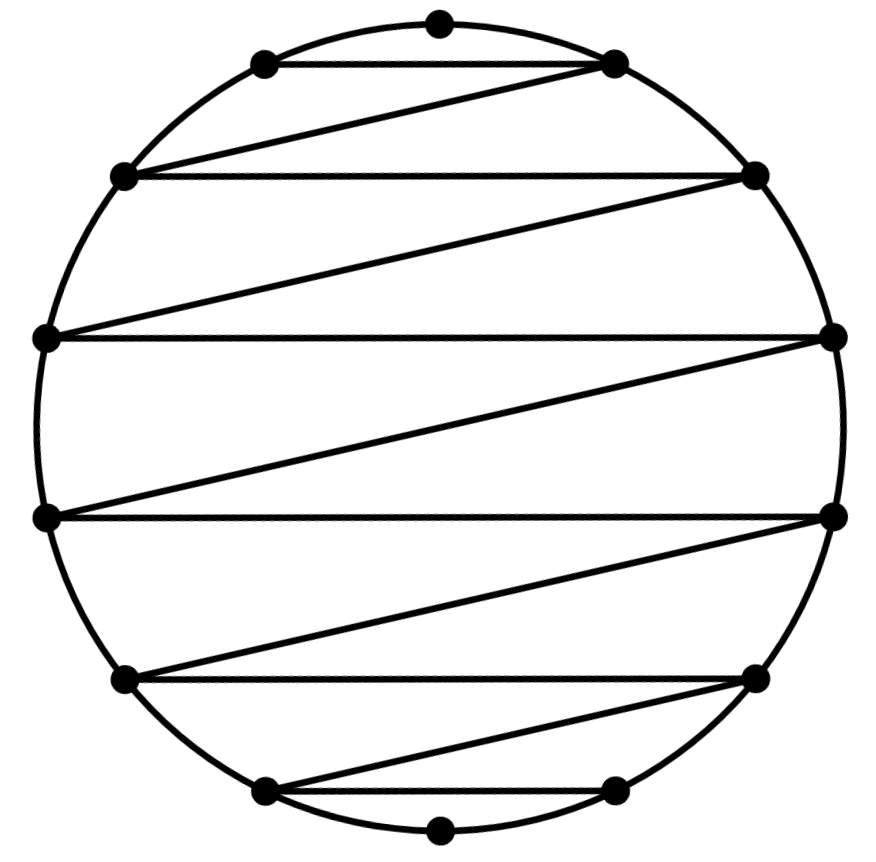}
        \label{figuremn}
    }
    \caption{~The graphs $T_{n}$ and $O_{14}$.}
\end{figure}

Therefore, Theorem~\ref{jg3} holds.
$\hfill \square$

\section{Proof of Theorem~\ref{jg4}}

Let $t$ and $i$ be integers such that $n=6t+i$, where $i\in \{0\}\cup [5]$, and let
\begin{equation}
{\rm{\label{fn}}}
f(n)={\left\{\begin{array}{ll}
  \frac{5n-3}{3},& if~n\equiv 0~(\textup{mod}~6); \\
  \frac{5n-5}{3},& if~n\equiv 1~(\textup{mod}~6); \\
  \frac{5n+i-9}{3},& if~n\equiv i~(\textup{mod}~6),~where~i\in\{2,3,4,5\}.
\end{array}\right.}
\end{equation}
To illustrate $ex_{_\mathcal{OP}}^{c}(n,S_{2,3})\ge f(n)$, for each $i\in \{ 0 \}\cup [5]$, we need merely construct an $S_{2,3}$-free connected outerplanar graph $H_{i}'$ with $6t+i$ vertices and $f(6t+i)$ edges. Let $H$ be the unique maximal outerplanar graph on 6 vertices with maximum degree of 4~(see Figure~\ref{figurem61}). By Lemma~\ref{lem2}, $|E(H)|=2|V(H)|-3= 2\times 6-3=9$. Since $|V(H)|=6<7=|V(S_{2,3})|$, $H$ is $S_{2,3}$-free. We use $H$ as the base block and construct $H_{i}'$ for each $i\in \{ 0 \}\cup [5]$ below.
\begin{figure}[htp]
	\centering
	\includegraphics[width=3.4cm]{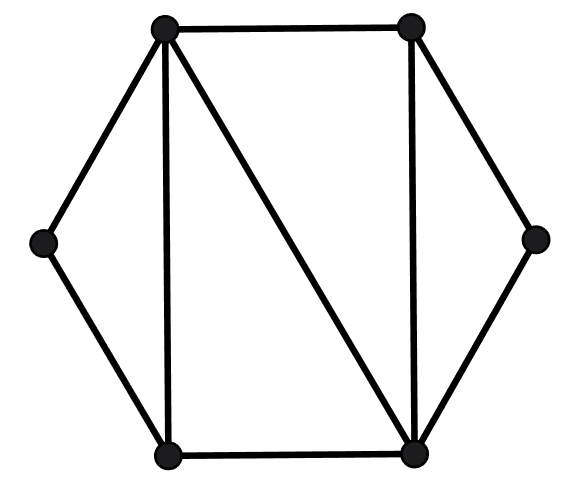}
	\caption{The graph $H$.}
	\label{figurem61}
\end{figure}

We first construct a connected outerplanar graph $H_{0}'$ as follows. By $n\ge 7$, we have $t\ge 1$. If $t=1$, then let $H_{0}'\cong H$. If $t\ge 2$, then first take $t$ disjoint copies of $H$, denoted $H_{1},H_{2},\ldots,H_{t}$. By the structure property of $H$, each $H_{i}$~(for $i\in [t]$) contains exactly two distinct vertices of degree 2, say $u_{i}$ and $v_{i}$. Let $H_{0}'$ be the graph obtained from $H_{1}\cup H_{2}\cup \ldots \cup H_{t}$ by adding $t-1$ edges: for each $i\in [t-1]$, add an edge between $v_{i}$ and $u_{i+1}$~(see Figure~\ref{figurehb}). We then construct a connected outerplanar graph $H_{i}'$ for each $i\in [5]$ as follows. Notice that $H_{0}'$ has exactly two vertices of degree 2, $u_{1}$ and $v_{t}$. For each $i\in[5]$, let $H_{i}'$ be the graph obtained from $H_{0}'$ by joining $v_{t}$ and one of the vertices of degree at most 2 in $M_{i}$, which is depicted in Figure~\ref{figurehbi} when $t=3$, where the part enclosed by a dashed circle is $M_{i}$.
\begin{figure}[htbp]
    \centering
    \subfigure[~The graph $H_{0}'$ when $t=3$.]{
        \includegraphics[width=11.5cm]{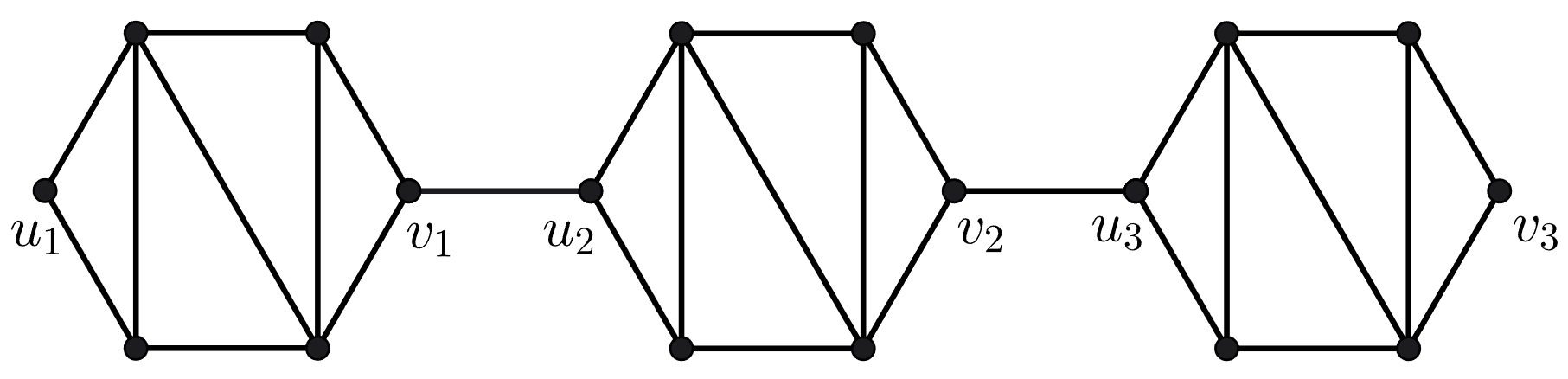}
        \label{figurehb}
    }
    \hspace{0.005\textwidth} 
    \subfigure[~The graph $H_{i}'$ when $t=3$, where $i\in \textup{[5]}$ and the part enclosed by a dashed circle is $M_{i}$.]{
        \includegraphics[width=15.6cm]{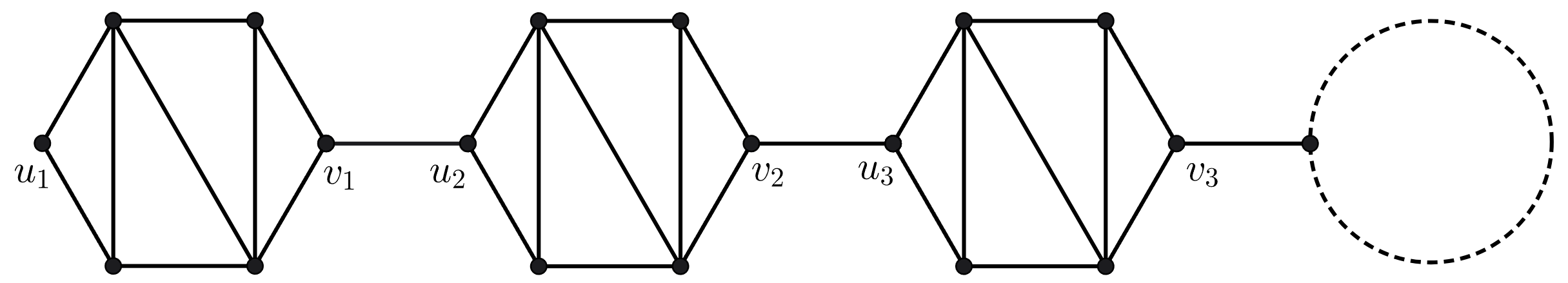}
        \label{figurehbi}
    }
    \caption{~The graph $H_{i}'$ when $t=3$, where $i\in \{0\}\cup \textup{[5]}$.}
\end{figure}

We are going to prove that $|V(H_{i}')|=6t+i$ and $|E(H_{i}')|=f(6t+i)$ for each $i\in \{0\}\cup [5]$. For $i= 0$, by the construction of $H_{0}'$, $|V(H)|=6$ and $|E(H)|=9$, we have $|V(H_{0}')|=t|V(H)|=6t$, and $|E(H_{0}')|=t|E(H)|+(t-1)=10t-1=\frac{5(6t)-3}{3}=f(6t)$, as desired.
For each $i\in [5]$, we have $|V(M_{i})|=i$ and $|E(M_{i})|=\textup{max}\{0,2i-3\}$ by Lemma~\ref{lem2}. Thus, by the construction of $H_{i}'$, $|V(H_{i}')|=|V(H_{0}')|+|V(M_{i})|=6t+i$, and $|E(H_{i}')|=|E(H_{0}')|+|E(M_{i})|=10t+\textup{max}\{0,2i-3\}$. Thus, $|E(H_{i}')|=10t+0=\frac{5(6t+i)-5}{3}=f(6t+i)$ if $i=1$, and $|E(H_{i}')|=10t+(2i-3)=\frac{5(6t+i)+i-9}{3}=f(6t+i)$ if $i\in\{2,3,4,5\}$, as desired.

To complete the proof of Theorem~\ref{jg4}, it suffices to verify that $H_{i}'$ is $S_{2,3}$-free for each $i\in \{ 0 \}\cup [5]$. To proceed with the proof, we first establish the following claim.
\begin{cla}
{\rm{\label{cla34e}}}
Let $G$ be an outerplanar graph. For any $(3,4)$-degree edge $xy$ in $E(G)$, if $N_{G}(x)\cap N_{G}(y)\neq \emptyset$, then $G$ is $S_{2,3}$-free.
\end{cla}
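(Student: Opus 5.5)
The plan is a direct counting argument on the neighbourhoods of the two centres of a hypothetical copy of $S_{2,3}$. I read the hypothesis of Claim~\ref{cla34e} as applying to every edge that could serve as the central edge of such a copy. The statement as written is for a single edge, which cannot control other edges, so this reading is an interpretation. In the intended application to $H_i'$, where $\Delta\le 4$, these central edges are the $(3,4)$-degree edges, and each of them has a common neighbour.

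I will argue by contradiction. Suppose $G$ contains $S_{2,3}(x,y;X,Y)$ with $|X|=2$, $|Y|=3$ and $X\cap Y=\emptyset$. Then $X\subseteq N_G(x)\setminus\{y\}$ and $Y\subseteq N_G(y)\setminus\{x\}$. This forces $d_G(x)\ge 3$ and $d_G(y)\ge 4$, and, more importantly,
$$\big|(N_G(x)\cup N_G(y))\setminus\{x,y\}\big|\ \ge\ |X\cup Y| = 5 .$$
So the whole claim reduces to showing that this union has at most $4$ vertices whenever $xy$ is a $(3,4)$-degree edge with $N_G(x)\cap N_G(y)\neq\emptyset$.

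That bound is inclusion--exclusion. We have $|N_G(x)\setminus\{y\}|=2$ and $|N_G(y)\setminus\{x\}|=3$. These two sets meet in $N_G(x)\cap N_G(y)$, which contains at least one vertex $z$. Hence
$$\big|(N_G(x)\setminus\{y\})\cup(N_G(y)\setminus\{x\})\big|\le 2+3-1=4<5,$$
so no disjoint $X,Y$ of sizes $2$ and $3$ exist, which is a contradiction. I will also spell out that swapping the roles of $x$ and $y$ is harmless, since the degree-$3$ endpoint can only be the centre with $p=2$ leaves.

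The main obstacle is not this counting, which is immediate. It is making sure the hypothesis really covers every possible central edge. If $G$ may contain an edge $xy$ with $d_G(x)\ge 3$, $d_G(y)\ge 4$ that is not a $(3,4)$-degree edge, then in the application to $H_i'$ I must treat it separately. For example, for an edge joining two degree-$4$ vertices of $H$, I would check that the two endpoints have at least two common neighbours, so that the union above has at most $3+3-2=4$ vertices. I would first verify this directly on the $6$-vertex graph $H$ and on the connecting edges $v_iu_{i+1}$, whose endpoints have degree $3$ in $H_0'$. I would then conclude that every candidate central edge satisfies the needed neighbourhood overlap.
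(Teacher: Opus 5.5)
Your argument is correct and matches the paper's. The paper splits on whether the common neighbour $z$ lies in $X$, obtaining $d_G(y)\ge 5$ or $d_G(x)\ge 4$; this is the same as your count $|(N_G(x)\setminus\{y\})\cup(N_G(y)\setminus\{x\})|\le 2+3-1=4<5$. Your caveat is also well founded: the paper's proof tacitly takes the given $(3,4)$-edge as the central edge of the copy, so it only shows that no $S_{2,3}$ is centred there, and in the application a $(4,4)$-edge of $H$ genuinely needs your two-common-neighbour check, since one common neighbour would not rule out a copy.
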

\textit{Proof}.~Suppose, by way of contradiction, that $G$ is an outerplanar graph satisfying the given condition but contains a subgraph isomorphic to $S_{2,3}(x,y;X,Y)$, so we have $|X|=2$, $|Y|=3$ and $X\cap Y= \emptyset$. Since $N_{G}(x)\cap N_{G}(y)\neq \emptyset$, let $z\in N_{G}(x)\cap N_{G}(y)$. By $X\cap Y= \emptyset$, we have $z\notin X\cap Y$. Thus, if $z\in X$, then $z\notin Y$, and so $\{z,x\}\cup Y\subseteq N_{G}(y)$, $d_{G}(y)\ge 2+|Y|=5$, contradicting $d_{G}(y)=4$. If $z\notin X$, then $\{z,y\}\cup X\subseteq N_{G}(x)$, and so $d_{G}(x)\ge 2+|X|=4$, contradicting $d_{G}(x)=3$. Therefore, $G$ is $S_{2,3}$-free.
$\hfill \square$
\bigskip

We now verify that $H_{i}'$ is $S_{2,3}$-free for each $i\in \{0\}\cup [5]$. Noting that when $t$ is fixed, $H_{i}' \subseteq H_{5}'$ for all $i \in \{0\} \cup [4]$, it suffices to show that $H_{5}'$ is $S_{2,3}$-free for all $t \ge 2$. Suppose, for contradiction, that $H_{5}'$ contains a subgraph $F$ isomorphic to $S_{2,3}$. Let $uv$ be the $(3,4)$-degree edge in $F$. For notational convenience, we denote the endblock of $H_{5}'$ that is isomorphic to $M_{5}$ as $H_{t+1}$. First, if both $u$ and $v$ lie in $H_{j}$ for some $j\in[t+1]$, then $N_{H_{5}'}(u) \cap N_{H_{5}'}(v) \neq \emptyset$ by the structural properties of $H_{j}$. This aligns with Claim~\ref{cla34e}, which implies $H_{5}'$ is $S_{2,3}$-free, a contradiction. Next, assume that $u$ lies in $H_{j}$ and $v$ lies in $H_{j+1}$ for some $j\in[t]$. A key property of $H_{5}'$, constructed from $H_{1},H_{2},\ldots,H_{t+1}$, is that all edges added during its construction connect only vertices of degree at most 2. Since $uv$ is an edge connecting $H_{j}$ and $H_{j+1}$, this implies $d_{H_{5}'}(v)\le 3$, contradicting the requirement that $d_{H_{5}'}(v)= 4$. Thus, our initial assumption is false, and $H_{5}'$ is $S_{2,3}$-free for all $t \ge 2$.

Hence, for each $i\in \{0\}\cup [5]$, $H_{i}'$ is an $S_{2,3}$-free connected outerplanar graph with $6t+i$ vertices and $f(6t+i)$ edges. Therefore, $ex_{_\mathcal{OP}}^{c}(n,S_{2,3})\ge |E(H_{i}')|=f(n)$. This complete the proof of Theorem~\ref{jg4}.
$\hfill \square$\\

\noindent {\bf Acknowledgments.} Yongxin Lan is partially supported by the National Natural Science Foundation of Hebei Province (No. A2025202034).


\end{sloppypar}

\end{document}